\newtheorem{theorem}{Theorem}
\newtheorem{lemma}[theorem]{Lemma}
\newtheorem{corollary}[theorem]{Corollary}
\long\def\symbolfootnote[#1]#2{\begingroup
\def\thefootnote{\fnsymbol{footnote}}\footnote[#1]{#2}\endgroup}
\newcommand{\des}[1]{\mathrm{des}(#1)}
\newcommand{\cdes}[1]{\mathrm{cdes}(#1)}
\newcommand{\cyc}[1]{\mathrm{cyc}(#1)}
\newcommand{\LtRMin}[1]{\mathrm{LtRMin}(#1)}
\newcommand{\sg}{\sigma}
\newcommand{\cref}[1]{Corollary \ref{corollary:#1}}
\newcommand{\red}[1]{\mathrm{red}(#1)}
\newcommand{\tmch}[1]{\text{$\tau$-$\mathrm{mch}$}(#1)}
\newcommand{\ctmch}[1]{\text{$c$-$\tau$-$\mathrm{mch}$}(#1)}
\newcommand{\cumch}[1]{\text{$c$-$\Upsilon$-$\mathrm{mch}$}(#1)}
\newcommand{\Umch}[1]{\text{$\Upsilon$-$\mathrm{mch}$}(#1)}
\title{Pattern Matching in the Cycle Structures of Permutations.}
\author{
Miles Eli Jones\\
\small Department of Mathematics\\[-0.8ex]
\small University of California, San Diego\\[-0.8ex]
\small La Jolla, CA 92093-0112. USA\\[-0.8ex]
\small \texttt{mej005@math.ucsd.edu}
\and
Jeffrey Remmel\footnote{Partially supported by NSF grant DMS 0654060.} \\
\small Department of Mathematics\\[-0.8ex]
\small University of California, San Diego\\[-0.8ex]
\small La Jolla, CA 92093-0112. USA\\[-0.8ex]
\small \texttt{jremmel@ucsd.edu}
\and
}
\date{\small Submitted: Date 1;  Accepted: Date 2;
 Published: Date 3.\\
\small MR Subject Classifications: 05A15, 05E05}
\begin{document}
\maketitle

\begin{abstract}
\noindent

In this paper, we study the occurrence of patterns in the 
cycle structures of permutations. 

\end{abstract}

\section{Introduction}

The notion
of  patterns in permutations and words
has proved to be a useful language in a
variety of seemingly unrelated problems including the theory of
Kazhdan-Lusztig polynomials, singularities of Schubert
varieties, Chebyshev polynomials, rook polynomials
for Ferrers boards, and various sorting algorithm 
including sorting stacks and sortable permutations.
The study of occurrences of 
patterns in words and permutations is a new, but
rapidly growing, branch of combinatorics which has its roots in the works
by Rotem, Rogers, and Knuth in the 1970s and early 1980s. 
The first systematic study of permutation patterns 
was not undertaken until the paper by
Simion and Schmidt~\cite{SchSim} which appeared in 1985. The
field has experienced explosive
growth since 1992.

The goal of this paper is to initiate the 
study pattern matching conditions in 
the cycle structure of a permutation. First we recall the basic 
definitions for pattern matching in permutations. 
Given a sequence $\sg = \sg_1 \ldots \sg_n$ of distinct integers,
let $\red{\sg}$ be the permutation found by replacing the
$i^{\textrm{th}}$ largest integer that appears in $\sg$ by $i$.  For
example, if $\sg = 2~7~5~4$, then $\red{\sg} = 1~4~3~2$.  Given a
permutation $\tau=\tau_1 \ldots \tau_j$ in the symmetric group $S_j$, we say a
permutation $\sg = \sg_1 \ldots \sg_n \in S_n$ has  a {\em
$\tau$-match starting at position $i$} provided $\red{\sg_i \ldots \sg_{i+j-1}}
= \tau$.  Let $\tmch{\sg}$ be the number of $\tau$-matches in the
permutation $\sg$.  Similarly, we say that $\tau$ {\em occurs} in
$\sg$ if there exist $1 \leq i_1 < \cdots < i_j \leq n$ such that
$\red{\sg_{i_1} \cdots \sg_{i_j}} = \tau$.  We say that $\sg$ {\em
avoids} $\tau$ if there are no occurrences of $\tau$ in $\sg$. 

These definitions can naturally be extended to sets of permutations. 
That is, given a set of permutations 
$\Upsilon$ in the symmetric group $S_j$, define a
permutation $\sg = \sg_1 \ldots \sg_n \in S_n$ to have a {\em
$\Upsilon$-match starting at position $i$} 
provided $\red{\sg_i \ldots \sg_{i+j-1}}
\in \Upsilon$.  Let $\Umch{\sg}$ be the number of $\Upsilon$-matches in the
permutation $\sg$.  Similarly, we say that $\Upsilon$ {\em occurs} in
$\sg$ if there exist $1 \leq i_1 < \cdots < i_j \leq n$ such that
$\red{\sg_{i_1} \cdots \sg_{i_j}} \in  \Upsilon$.  We say that $\sg$ {\em
avoids} $\Upsilon$ if there are no occurrences of $\Upsilon$ in $\sg$.

In this paper, we want to study matching conditions within 
the cycle structure of a permutation. Suppose that 
$\tau=\tau_1 \ldots \tau_j$ is a permutation in $S_j$ and 
$\sg$ is a permutation in $S_n$ with $k$ cycles $C_1 \ldots C_k$. 
We shall always write cycles in the form 
$C_i =(c_{0,i}, \ldots, c_{p_i-1,i})$  where $c_{0,i}$ is the smallest 
element in $C_i$ and $p_i$ is the length of $C_i$ and we arrange 
the cycles by increasing smallest elements. That is, we arrange 
the  cycles of $\sg$ so that $c_{0,1} < \cdots < c_{0,k}$. Then 
we say that $\sg$ has a {\em cycle $\tau$-match} ($c$-$\tau$-match)
if there is an $i$ such that $C_i =(c_{0,i}, \ldots, c_{p_i-1,i})$ where 
$p_i \geq j$ and an $r$ such that 
$\red{c_{r,i} c_{r+1,i} \ldots c_{r+j-1,i}} = 
\tau$ where we take indices of the form $r+s$ modulo $p_i$.   
Let $\ctmch{\sg}$ be the number of cycle $\tau$-matches in the
permutation $\sg$.  For example, 
if $\tau =2~1~3$ and $\sg = (1,10,9)(2,3)(4,7,5,8,6)$, then 
$9~1~10$ is a cycle $\tau$-match in the first cycle and 
$7~5~8$ and $6~4~7$ are cycle $\tau$-matches in the third cycle so 
that $\ctmch{\sg} =3$.  Similarly, we say that $\tau$ {\em cycle occurs} in
$\sg$ if there exists an $i$ such that 
$C_i =(c_{0,i}, \ldots, c_{p_i-1,i})$ where 
$p_i \geq j$ and there is 
an $r$ with $0 \leq r \leq p_i-1$ and indices 
$0 \leq i_1 < \cdots < i_{j-1} \leq p_i-1$ such that 
$\red{c_{r,i} c_{r+i_1,i} \ldots c_{r+i_{j-1},i}} = \tau$ where 
the indices $r+{i_s}$ are taken mod $p_i$.  
We say that $\sg$ {\em
cycle avoids} $\tau$ if there are no cycle occurrences of $\tau$ in $\sg$. 
For example, 
if $\tau =1~2~3$ and $\sg = (1,10,9)(2,3)(4,8,5,7,6)$, then 
$4~5~7$, $4~5~6$,  and $5~6~8$ are cycle occurrences of $\tau$  
in the third cycle.

We can 
extend of the notion of cycle matches and cycle occurrences to sets of 
permutations in the obvious fashion. That is, suppose that 
$\Upsilon$ is a set of permutations in $S_j$ and 
$\sg$ is a permutation in $S_n$ with $k$ cycles $C_1 \ldots C_k$. 
 Then 
we say that $\sg$ has a {\em cycle $\Upsilon$-match} ($c$-$\Upsilon$-match)
if there is an $i$ such that $C_i =(c_{0,i}, \ldots, c_{p_i-1,i})$ where 
$p_i \geq j$ and an $r$ such that $\red{c_{r,i} \ldots c_{r+j-1,i}} \in 
\Upsilon$ where we take indices of the form $r+s$ modulo $p_i$.  
Let $\cumch{\sg}$ be the number of cycle $\Upsilon$-matches in the
permutation $\sg$.    Similarly, we say that $\Upsilon$ {\em cycle occurs} in
$\sg$ if there exists an $i$ 
such that $C_i =(c_{0,i}, \ldots, c_{p_i-1,i})$ where 
$p_i \geq j$ and there is 
an $r$ with $0 \leq r \leq p_i-1$ and indices 
$0 \leq i_1 < \cdots < i_{j-1} \leq p_i-1$ such that $\red{c_{r,i} c_{r+i_1,i} \ldots c_{r+i_{j-1},i}} \in \Upsilon$ 
where the indices $r+{i_s}$ are taken mod $p_i$.  
We say that $\sg$ {\em
cycle avoids} $\Upsilon$ if there are no cycle occurrences of 
$\Upsilon$ in $\sg$.

Given $\Upsilon \subseteq S_j$, we
 let $\mathcal{AS}_n(\Upsilon)$ ($\mathcal{CAS}_n(\Upsilon)$)
denote the set of permutations of $S_n$ which 
avoid (cycle avoid) $\Upsilon$ and 
$aS_n(\Upsilon) =|\mathcal{AS}_n(\Upsilon)|$  
($caS_n(\Upsilon) =|\mathcal{CAS}_n(\Upsilon)|$). 
Similarly, we
 let $\mathcal{NMS}_n(\Upsilon)$ ($\mathcal{NCMS}_n(\Upsilon)$)
denote the set of permutations of $S_n$ which 
have no $\Upsilon$-matches (no cycle $\Upsilon$-matches) $\Upsilon$ and 
$nmS_n(\Upsilon) =|\mathcal{NMS}_n(\Upsilon)|$  
($ncmS_n(\Upsilon) =|\mathcal{NCMS}_n(\Upsilon)|$). 
Throughout this paper, when $\Upsilon =\{\tau\}$ is a singleton, 
we shall just write the $\tau$ rather than $\{\tau\}$. Thus for example, 
we shall write $\mathcal{AS}_n(\tau)$ for 
$\mathcal{AS}_n(\Upsilon)$ when $\Upsilon =\{\tau\}$.

Given $\alpha$ and $\beta$ in 
$S_j$, we say that $\alpha$ and $\beta$ are 
{\em Wilf equivalent}  if $aS_n(\alpha) = aS_n(\beta)$ for all $n$. 
We say that $\alpha$ and $\beta$ are 
{\em matching Wilf equivalent} (m-Wilf equivalent)  
if $nmS_n(\alpha) = nmS_n(\beta)$ for all $n$.
For any permutation $\sg = \sg_1 \ldots \sg_n$, we let $\sg^r$ be 
the reverse of $\sg$ and $\sg^c$ be the complement of $\sg$. That is, 
$\sg^r = \sg_n \ldots \sg_1$ and $\sg^c = (n+1 - \sg_1) \ldots (n+1 - \sg_n)$. 
It is well known that Wilf equivalence classes and m-Wilf equivalence 
classes are closed under reverse and complementation. We say that $\alpha$ and $\beta$ are 
{\em cycle avoidance Wilf equivalent} (ca-Wilf equivalent) if $caS_n(\alpha) = caS_n(\beta)$ for all $n$ and we say that $\alpha$ and $\beta$ are 
{\em cycle matching Wilf equivalent} (cm-Wilf equivalent) if $ncmS_n(\alpha) = ncmS_n(\beta)$ for all $n$. If $\alpha$ and 
$\beta$ are cycle avoidance Wilf equivalent, we shall write 
$\alpha \sim_{ca} \beta$. If $\alpha$ and 
$\beta$ are cycle matching Wilf equivalent, we shall write 
$\alpha \sim_{cm} \beta$. Similarly, for sets of 
permutations $\Gamma$ and $\Delta$ in $S_j$, 
we say that $\Gamma$ and $\Delta$ are cycle avoidance Wilf equivalent (ca-Wilf equivalent) if $caS_n(\Gamma) = caS_n(\Delta)$ for all $n$ and we say that $\Gamma$ and $\Delta$ are cycle matching Wilf equivalent (cm-Wilf equivalent) if $ncmS_n(\Gamma) = ncmS_n(\Delta)$ for all $n$.

If $\sg$ is a permutation in $S_n$ with $k$ cycles $C_1 \ldots C_k$, 
then we let the {\em cycle reverse of $\sg$}, denoted by $\sg^{cr}$, be the 
permutation which arises from $\sg$ by replacing each 
cycle $C_i =(c_{0,i}, c_{1,i},\ldots, c_{p_i-1,i})$ by its reverse cycle 
$C_i^{cr} =(c_{0,i},c_{p_i-1,i}, \ldots c_{1,i})$. For example, 
if $\sg = (1,10,9)(2,3)(4,7,5,8,6)$, then 
$\sg^{cr} = (1,9,10)(2,3)(4,6,8,5,7)$.  We let the 
cycle complement of $\sg$, denoted by $\sg^{cc}$, be the permutation 
that results from $\sg$ by  
replacing each element $i$ in the cycle structure of $\sg$ by 
$n+1 -i$.  For example, 
if $\sg = (1,10,9)(2,3)(4,7,5,8,6)$, then 
$\sg^{cr} = (10,1,2)(9,8)(7,4,6,3,5) = (1,2,10)(3,5,7,4,6)(8,9)$. 
Note that in general $\sg^r$, $\sg^c$, $\sg^{cr}$ and $\sg^{cc}$ 
are all distinct. For example, $\sg = 2~3~1~5~4$ so that it 
cycle structure is $(1,2,3)(4,5)$, then 
\begin{eqnarray*}
\sg^r &=& 4~5~1~3~2, \\
\sg^c &=& 4~3~5~1~2, \\
\sg^{cr} &=& (1,3,2)(4,5) = 3~1~2~5~4, \ \mbox{and} \\
\sg^{cc} &=& (5,4,3)(2,1) = 2~1~5~3~4.
\end{eqnarray*}

It is easy to see that for any permutation $\sg \in S_n$,
\begin{enumerate} 
\item $\sg$ has a cycle $\tau$-match if and only if $\sg^{cr}$ has 
a cycle $\tau^r$-match, 
\item $\sg$ has a cycle $\tau$-match if and only if $\sg^{cc}$ has 
a cycle $\tau^c$-match, 
\item $\sg$ has a cycle $\tau$ occurrence  if and only if $\sg^{cr}$ has 
a cycle $\tau^r$ occurrence, and 
\item $\sg$ has a cycle $\tau$ occurrence  if and only if $\sg^{cc}$ has 
a cycle $\tau^c$ occurrence. 
\end{enumerate}
It then easily follows that for all permutations $\tau$, 
$ncmS_n(\tau) = ncmS_n(\tau^r) = ncmS_n(\tau^c)$ 
so that $\tau$, $\tau^r$, and $\tau^c$ are all cycle matching  
Wilf equivalent. Similarly,  $caS_n(\tau) = caS_n(\tau^r) = caS_n(\tau^c)$ 
so that $\tau$, $\tau^r$, and $\tau^c$ are all cycle avoidance  
Wilf equivalent.  Finally we observe that our definitions also ensure 
that for any $\tau = \tau_1 \ldots \tau_j \in S_j$, any cyclic 
rearrangement of $\tau$, $\tau^{(i)} = \tau_i \ldots \tau_j \tau_1 \ldots 
\tau_{i-1}$ also has the property that for any $\sg \in S_n$, $\tau$ 
cycle occurs in $\sg$ if and only if $\tau^{(i)}$ cycle occurs in 
$\sg$. Thus for all $1 \leq j$, 
$caS_n(\tau) = caS_n(\tau^{(i)})$ so that $\tau$ and $\tau^{(i)}$ 
are cycle avoidance Wilf equivalent.

Given a permutation $\sg = \sg_1 \ldots \sg_n \in S_n$, we let 
$\des{\sg} = |\{i: \sg_i > \sg_{i+1}\}|$.  We say that 
$\sg_j$ is a {\em left-to-right minima} of $\sg$ if $\sg_j < \sg_i$ for 
all $i<j$. We let $\LtRMin{\sg}$ denote the number of 
left-to-right minma of $\sg$. Given a cycle 
$C=(c_0, \ldots, c_{p-1})$ where $c_0$ is the smallest element in 
the cycle, we let $\cdes{C} = 1+ \des{c_0 \ldots c_{p-1}}$. Thus 
$\cdes{C}$ counts the number of descent pairs as we traverse 
once around the cycle because the extra factor of $1$ counts 
the descent pair $c_{p-1}>c_0$. For example if $C = (1,5,3,7,2)$, then 
$\cdes{C} = 3$ which counts the descent pairs $53$, $72$, and $21$ as 
we traverse once around $C$.  By convention, if 
$C =(c_0)$ is one-cycle, we let $\cdes{C} = 1$. If $\sg$ is a permutation in $S_n$ with $k$ cycles $C_1 \ldots C_k$, then 
we define $\cdes{\sg} = \sum_{i=1}^k \cdes{C_i}$.  We let 
$\cyc{\sg}$ denote the number of cycles of $\sg$.

The main goal of this paper is to study the generating functions 
\begin{equation}\label{caU}
CA_{\Upsilon}(t) = 1 + \sum_{n \geq 1} caS_n(\Upsilon) \frac{t^n}{n!},
\end{equation} 
and 
\begin{equation}\label{ncmU}
NCM_{\Upsilon}(t) = 1 + \sum_{n \geq 1} ncmS_n(\Upsilon) \frac{t^n}{n!}
\end{equation}
for $\Upsilon \subseteq S_j$ as well as refinements of such 
generating functions such as 
\begin{eqnarray*}
CA_{\Upsilon}(t,x) &=& 1 + \sum_{n \geq 1}  \frac{t^n}{n!} 
\sum_{\sg \in \mathcal{CAS}_n(\Upsilon)} x^{\cyc{\sg}},\\
CA_{\Upsilon}(t,x,y) &=& 1 + \sum_{n \geq 1}  \frac{t^n}{n!} 
\sum_{\sg \in \mathcal{CAS}_n(\Upsilon)} x^{\cyc{\sg}} y^{\cdes{\sg}},\\
NCM_{\Upsilon}(t,x) &=& 1 + \sum_{n \geq 1}  \frac{t^n}{n!} 
\sum_{\sg \in \mathcal{NCMS}_n(\Upsilon)} x^{\cyc{\sg}}, \ \mbox{and}\\
NCM_{\Upsilon}(t,x,y) &=& 1 + \sum_{n \geq 1}  \frac{t^n}{n!} 
\sum_{\sg \in \mathcal{NCMS}_n(\Upsilon)} x^{\cyc{\sg}} y^{\cdes{\sg}}.
\end{eqnarray*}

We know of several ways to approach the this problem. 
The most direct way 
is to use the theory of exponential structures to 
reduce the problem down to studying pattern matching in 
$n$-cycles. That is, let $\mathcal{L}_m$ denote the set 
$m$-cycles in $S_m$. Suppose that $R$ is a ring and 
for each $m \geq 1$, we have a weight function 
$W_m:\mathcal{L}_m \rightarrow R$. We let 
$W(L_m) = \sum_{C \in \mathcal{L}_m} W_m(C)$.  Now suppose that 
$\sg \in S_n$ and the cycles of $\sg$ are $C_1, \ldots , C_k$. 
If $C_i$ is of size $m$, then we let $W(C_i) =W_m(red(C_i))$ where 
$red(C_i)$ is the $m$-cycle in $S_m$ that results by replacing 
$j$-th smallest element in $C_i$ by $j$ for $j=1, \ldots, m$. For example, 
if $C_i =(1,5,7,10,4)$, then $red(C_i) = (1,3,4,5,2)$. 
Then we define the weight of $\sg$, $W(\sg)$, by
$$W(\sg)= \prod_{i=1}^k W(C_i).$$   
We let $\mathcal{C}_{n,k}$ denote the set of all permutations 
of $S_n$ with $k$ cycles. 
This given, the following theorem easily follows 
from the theory of exponential structures, see \cite{Stan}. 

\begin{theorem}\label{genexp}
\begin{equation}
1+\sum_{n=1}^\infty \frac{t^n}{n!} \sum_{k=1}^n x^k 
\sum_{\sg \in \mathcal{C}_{n,k}} W(\sg) = 
e^{x \sum_{m \geq 1} \frac{W(L_m)t^m}{m!}}.
\end{equation}
\end{theorem}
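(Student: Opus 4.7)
The plan is to apply the exponential formula for labeled combinatorial structures, taking the ``connected'' building block to be a single cycle. First I would observe that the disjoint cycle decomposition realizes each $\sg \in S_n$ as a set partition of $[n]$ together with a cyclic ordering on each block, and that by construction $W(\sg) = \prod_i W(C_i)$ is multiplicative along this decomposition. Moreover $W(C_i) = W_{|C_i|}(\red{C_i})$ depends only on the reduction, so the weight of a cycle is an intrinsic function of the pair (block, cyclic ordering on it).

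Next I would compute the exponential generating function of the building block. Fix an $m$-element set $S \subseteq [n]$ and write each $m$-cycle $C$ on $S$ in the normalized form $(c_0, c_1, \ldots, c_{m-1})$ with $c_0 = \min S$; using the same normalization on $\mathcal{L}_m$, the map $C \mapsto \red{C}$ is a bijection from the set of $m$-cycles on $S$ onto $\mathcal{L}_m$. Hence the total weight of cycles supported on $S$ equals $\sum_{C' \in \mathcal{L}_m} W_m(C') = W(L_m)$, a quantity independent of which $m$-subset $S \subseteq [n]$ was chosen. Packaging this by $m$, the EGF of a single weighted cycle is
$$F(t) \;=\; \sum_{m \geq 1} \frac{W(L_m)\, t^m}{m!}.$$

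Finally I would invoke the exponential formula (Stanley, Enumerative Combinatorics, Vol.~II, Ch.~5): if $F(t)$ is the EGF of a class of weighted connected labeled objects and the weights are extended multiplicatively across components, then $\exp(xF(t))$ is the EGF of disjoint unions of such objects, with $x$ marking the number of components. Since every permutation of $[n]$ is uniquely a disjoint union of cycles on the blocks of a set partition of $[n]$, grouping by $\cyc{\sg}=k$ gives
$$e^{xF(t)} \;=\; 1 + \sum_{n \geq 1}\frac{t^n}{n!}\sum_{k=1}^{n} x^k \sum_{\sg \in \mathcal{C}_{n,k}} W(\sg),$$
which is exactly the stated identity. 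The only substantive step is verifying that $\red{\cdot}$ is a bijection from $m$-cycles on $S$ onto $\mathcal{L}_m$; once this is in hand, the rest is a direct application of a standard theorem, so I do not expect any real obstacle.
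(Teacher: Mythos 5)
Your proof is correct and follows essentially the same route as the paper, which simply cites the theory of exponential structures (the exponential formula) from Stanley; you have filled in the standard details (multiplicativity of $W$ over cycles, the bijection $C \mapsto \red{C}$ showing the cycle EGF is $\sum_{m \geq 1} W(L_m)t^m/m!$, and the exponential formula with $x$ marking components). Nothing further is needed.
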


Let $\Upsilon \subseteq S_j$. Then we will be most interested in the 
special case of weight 
functions $W_m$ where $W_m(C) =1$ if $C$ cycle avoids 
a set of permutations  and $W_m(C) =0$ otherwise 
or where $W_m(C) =1$ if $C$ has no cycle $\Upsilon$-matches and $W_m(C) =0$ otherwise.  We  let  
$\mathcal{CAS}_{n,k}(\Upsilon)$ denote the set of permutations $\sg$ 
of $S_n$ with $k$ cycles such that $\sg$ cycle avoids $\Upsilon$ and 
we let $caS_{n,k}(\Upsilon) = |\mathcal{CAS}_{n,k}(\Upsilon)|$. 
We  let $\mathcal{NCMS}_{n,k}(\Upsilon)$ denote the set  of permutations $\sg$ 
of $S_n$ with $k$ cycles such that $\sg$ has no cycle $\Upsilon$-matches 
and $ncmS_{n,k}(\Upsilon) = |\mathcal{NCMS}_{n,k}(\Upsilon)|$. 
Similarly, we let $\mathcal{L}_m^{ca}(\Upsilon)$ be 
the set of $m$ cycles $\gamma$ 
in $S_m$ such $\gamma$ cycle avoids $\Upsilon$, $L_{m}^{ca}(\Upsilon)  = 
|\mathcal{L}_m^{ca}(\Upsilon)|$, 
$\mathcal{L}_m^{ncm}(\Upsilon)$ denote the set  of $m$ cycles $\gamma$ 
in $S_m$ such $\gamma$ has no cycle $\Upsilon$-matches, and 
$L_{m}^{ncm}(\Upsilon)  = |\mathcal{L}_m^{ncm}(\Upsilon)|$.
Then a special case of Theorem \ref{genexp} is the following theorem. 

\begin{theorem}\label{Fundrefined} 
 \begin{equation}\label{expcaUtx}
CA_{\Upsilon} (t,x) = 1 + \sum_{n \geq 1} \frac{t^n}{n!} \sum_{k=1}^n 
caS_{n,k}(\Upsilon) x^k = 
e^{x \sum_{m \geq 1} L_m^{ca}(\Upsilon) \frac{t^m}{m!}},
\end{equation}
\begin{equation}\label{expncmUtx}
NCM_{\Upsilon} (t,x) =  1 + \sum_{n \geq 1} \frac{t^n}{n!} \sum_{k=1}^n 
ncmS_{n,k}(\Upsilon) x^k  =  e^{x \sum_{m \geq 1} L_m^{ncm}(\Upsilon) \frac{t^m}{m!}},
\end{equation}
\begin{equation}\label{expcaUtxy}
CA_{\Upsilon} (t,x,y) = 1 + \sum_{n \geq 1} \frac{t^n}{n!} \sum_{k=1}^n 
x^k \sum_{\sg \in \mathcal{CAS}_{n,k}(\Upsilon)} y^{\cdes{\sg}}= 
e^{x \sum_{m \geq 1}  \frac{t^m}{m!}  
\sum_{C \in \mathcal{L}^{ca}_m(\Upsilon)} y^{\cdes{C}}},
\end{equation}
and 
\begin{equation}\label{expncmUtxy}
NCM_{\Upsilon} (t,x,y) = 1 + \sum_{n \geq 1} \frac{t^n}{n!} \sum_{k=1}^n 
x^k \sum_{\sg \in \mathcal{NCMS}_{n,k}(\Upsilon)} y^{\cdes{\sg}}= 
e^{x \sum_{m \geq 1}  \frac{t^m}{m!} 
\sum_{C \in \mathcal{L}^{ncm}_m(\Upsilon)} y^{\cdes{C}}}.
\end{equation}
\end{theorem}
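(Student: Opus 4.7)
The plan is to derive each of the four equations as a direct instance of Theorem \ref{genexp}, by choosing the appropriate weight function $W_m$ on the set $\mathcal{L}_m$ of $m$-cycles and verifying that the resulting product weight on $\sg \in S_n$ matches the statistic we want to track.

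For (\ref{expcaUtx}), I would define $W_m(\gamma) = 1$ if $\gamma \in \mathcal{L}_m^{ca}(\Upsilon)$ and $W_m(\gamma) = 0$ otherwise, so that $W(L_m) = L_m^{ca}(\Upsilon)$. The key observation is that cycle avoidance is a pattern property: a cycle $C_i = (c_{0,i},\ldots,c_{p_i-1,i})$ cycle avoids $\Upsilon$ if and only if its reduction $\red{C_i}$ does, since reduction preserves the relative order of the entries and the cyclic structure on which cycle occurrences are defined. Moreover, since cycle occurrences of a pattern involve positions that all lie in a single cycle, $\sg$ cycle avoids $\Upsilon$ if and only if every cycle of $\sg$ does. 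Hence $W(\sg) = \prod_i W(C_i)$ is the indicator of the event $\sg \in \mathcal{CAS}_n(\Upsilon)$, and Theorem \ref{genexp} immediately yields (\ref{expcaUtx}). Equation (\ref{expncmUtx}) is proved in exactly the same way, substituting ``has no cycle $\Upsilon$-match'' for ``cycle avoids $\Upsilon$''; the decomposition again works because a cycle $\Upsilon$-match is by definition confined to one cycle.

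For (\ref{expcaUtxy}) and (\ref{expncmUtxy}) I would refine the weights by setting $W_m(\gamma) = y^{\cdes{\gamma}}$ if $\gamma$ cycle avoids $\Upsilon$ (resp. has no cycle $\Upsilon$-match) and $0$ otherwise, so that $W(L_m) = \sum_{\gamma \in \mathcal{L}_m^{ca}(\Upsilon)} y^{\cdes{\gamma}}$ (resp. the $\mathcal{L}_m^{ncm}(\Upsilon)$ analogue). Two small points need to be checked: first, that $\cdes{C_i} = \cdes{\red{C_i}}$, which holds because $\cdes$ of a cycle is defined via descents in $c_{0,i}\ldots c_{p_i-1,i}$ (read cyclically) and descents depend only on relative order; and second, that $\cdes{\sg} = \sum_{i=1}^k \cdes{C_i}$, which is the definition given in the paper. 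Combining these two facts gives $W(\sg) = y^{\cdes{\sg}} \cdot [\sg \in \mathcal{CAS}_n(\Upsilon)]$ (resp. the matching version), and Theorem \ref{genexp} now delivers the stated formulas.

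The argument is essentially bookkeeping, so there is no real obstacle; the only thing that requires a moment of care is the claim that cycle $\Upsilon$-matches and cycle occurrences are well defined on the reduction of a single cycle and do not cross cycle boundaries, which guarantees that the relevant statistics are multiplicative over cycles in precisely the form required by the exponential formula of Theorem \ref{genexp}.
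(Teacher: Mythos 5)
Your proposal is correct and follows exactly the route the paper intends: the paper states this theorem as an immediate special case of Theorem \ref{genexp}, obtained by taking $W_m$ to be the indicator of cycle avoidance (resp.\ no cycle $\Upsilon$-match), or that indicator times $y^{\cdes{\gamma}}$ for the refined versions. The bookkeeping you spell out --- that reduction preserves cycle occurrences, cycle matches, and $\mathrm{cdes}$, and that these statistics are multiplicative over cycles --- is precisely what the paper leaves implicit.
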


For example, suppose that $\tau = 1~2$.  It is clear 
that any cycle of length $k$ where $k \geq 2$ has both a 
cycle occurrence of $\tau$ and 
a cycle $\tau$-match so that $L_m^{ca}(12) = L_m^{ncm}(12) = 0$ 
if $m \geq 2$.  Since 1-cycles can  not have any cycle occurrences 
of $\tau$ or any cycle $\tau$-matches by definition, it 
follows that
\begin{equation*} 
y = \sum_{C \in \mathcal{L}^{ca}_1(12)} y^{\cdes{C}} \\
= \sum_{C \in \mathcal{L}^{ncm}_1(12)} y^{\cdes{C}}.
\end{equation*}
Thus 
\begin{equation*}
CA_{12}(t,x,y) = NCM_{12}(t,x,y) = e^{xyt}.
\end{equation*}
Next consider $\tau = 1~2~3$. It is easy to see that 
for $k \geq 3$, the only $k$-cycle which cycle avoids  
$\tau$ is the cycle $(1,k,k-1, \ldots, 2)$. Let 
\begin{equation*}
A_m(y) = \sum_{C \in \mathcal{L}^{ca}_m(123)} y^{\cdes{C}},
\end{equation*} 
then clearly $A_1(y) = y$ since $\cdes{(1)} =1$, 
$A_2(y) = y$ since $\cdes{(1,2)} =1$, and for 
$k \geq 3$, $A_k(y) = y^{k-1}$ since 
$\cdes{(1,k,\ldots,2)} = k-1$. Thus 
\begin{equation*}
CA_{123}(t,x,y) = e^{x \left(yt+\sum_{m \geq 2} \frac{y^{m-1}t^m}{m!}\right)} = 
e^{x\left(yt+ \frac{1}{y}(e^{yt}-1-yt)\right)}.
\end{equation*}

It turns out that if $\tau \in S_j$ is a permutation that 
starts with 1, then we can reduce the problem of finding 
$NCM_{\tau}(t,x)$ and $NCM_{\tau}(t,x,y)$ to the usual problem of finding 
the generating function of permutations that have no $\tau$-matches. 
That is, suppose we are given a permutation $\sg \in S_n$ with 
$k$-cycles $C_1 \cdots  C_k$. Assume we have arranged 
the cycles so that the smallest element in each cycle is on 
the left and we arrange the cycles by decreasing smallest elements. Then 
we let $\bar{\sg}$ be the permutation that arise from 
$C_1 \cdots C_k$ by erasing all the parenthesis and commas. 
For example, if 
$\sg = (7,10,9,11)\ (4,8,6) \ (1,5,3,2)$, then 
$\bar{\sg} = 7~10~9~11~4~8~6~1~5~3~2$. It is easy to see that 
the minimal elements of the cycles correspond to left-to-right minima in 
$\bar{\sg}$.  It is also easy to see 
that under our bijection $\sg \rightarrow \bar{\sg}$, that 
$cdes(\sg) = \des{\bar{\sg}}+1$ since every left-to-right minima 
is part of a descent pair in $\bar{\sg}$. For example, if 
$\sg = (7,10,9,11)\ (4,8,6) \ (1,5,3,2)$ so that 
$\bar{\sg} = 7~10~9~11~4~8~6~1~5~3~2$, 
$\cdes{(7,10,9,11)} = 2$, $\cdes{(4,8,6)} =2$, and 
$\cdes{(1,5,3,2)} =4$ so that $\cdes{\sg} = 2+2+4 =8$ while 
$\des{\bar{\sg}} =  7$.  This given, we 
have the following lemma. 

\begin{lemma}\label{key1}
If $\tau \in S_j$ and $\tau$ starts with 1, then 
for any $\sg \in S_n$, 
\begin{enumerate}
\item $\sg$ has $k$ cycles if and only if $\bar{\sg}$ has $k$ left-to-right
minima,
\item $\cdes{\sg} = 1+\des{\bar{\sg}}$, and 
\item $\sg$ has no cycle-$\tau$-matches if and 
only if $\bar{\sg}$ has no $\tau$-matches.
\end{enumerate} 
\end{lemma}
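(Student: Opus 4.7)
My plan is to verify each of the three claims in turn by examining what the cycle-to-word bijection $\sigma \mapsto \bar{\sigma}$ does, using heavily the two conventions: within each cycle the smallest element comes first, and the cycles themselves are listed in \emph{decreasing} order of their minima.

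For part (1), I would argue that the left-to-right minima of $\bar{\sigma}$ are exactly the smallest elements $c_{0,1},\dots,c_{0,k}$ of the cycles. Each $c_{0,i}$ is smaller than every entry of every earlier cycle block (since those cycles had larger minima, and every entry in a cycle is at least the cycle minimum), so $c_{0,i}$ is a left-to-right minimum. Conversely, no other entry of a cycle block can be a left-to-right minimum because the cycle minimum appears first within that block. This is a short verification.

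For part (2), I would compute $\des{\bar\sigma}$ by splitting the descents into those internal to a single cycle block $C_i$ and those at the boundary between $C_i$ and $C_{i+1}$. The internal descents of block $C_i$ are exactly $\des{c_{0,i}\, c_{1,i}\cdots c_{p_i-1,i}}$. At a boundary, the last entry of $C_i$ is at least $c_{0,i}$, and $c_{0,i}>c_{0,i+1}$ by our ordering convention, so every boundary is a descent, contributing $k-1$ descents. Adding and comparing with $\cdes{\sigma}=\sum_{i=1}^k(1+\des{c_{0,i}\cdots c_{p_i-1,i}})=k+\sum_i\des{c_{0,i}\cdots c_{p_i-1,i}}$ gives $\cdes{\sigma}=1+\des{\bar\sigma}$.

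The substantive step is part (3), and the crucial use of the hypothesis ``$\tau$ starts with $1$'' enters here. I would argue in both directions that every relevant match is confined to a single cycle block with no wraparound. For the forward direction, suppose a $\tau$-match in $\bar\sigma$ begins at a position lying in block $C_a$ and extends into block $C_{a+1}$. Because $\tau$ starts with $1$, the first entry of the match must be the minimum of the $j$ entries of the match. But the match contains $c_{0,a+1}$, and $c_{0,a+1}<c_{0,a}$ is at most the first entry of the match, contradicting minimality. So every $\tau$-match of $\bar\sigma$ is a consecutive substring of a single cycle block $c_{0,i}c_{1,i}\cdots c_{p_i-1,i}$, and such a substring is precisely a non-wraparound cycle-$\tau$-match of $C_i$. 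For the reverse direction, I would show that any genuinely wraparound cycle-$\tau$-match in $C_i$ would contain $c_{0,i}$ at a non-initial position; since $c_{0,i}$ is the smallest entry of the cycle, it would be the minimum of the match while not being first, again contradicting that $\tau$ starts with $1$. Hence every cycle-$\tau$-match of $\sigma$ is non-wraparound and matches a $\tau$-match of $\bar\sigma$, giving the claimed equivalence.

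The only place that needs a little care is the forward direction of (3), where one must also check the possibility that the $\tau$-match spans more than two consecutive blocks; the same minimality argument using the block of smallest index traversed rules this out immediately. Everything else is bookkeeping from the two ordering conventions.
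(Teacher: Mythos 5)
Your proposal is correct and follows essentially the same route as the paper: everything rests on the single observation that, because $\tau$ starts with $1$, the first entry of any match must be the smallest entry of the match, which simultaneously prevents a $\tau$-match in $\bar{\sigma}$ from crossing a block boundary and prevents a cycle $\tau$-match from wrapping around past the cycle minimum. The paper phrases the forward direction of part (3) as an iterative peeling argument starting from the position of $1$ and treats parts (1) and (2) as immediate from the construction of $\bar{\sigma}$, but the content is the same as yours.
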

\begin{proof}
  
For (3), suppose that $\bar{\sg} =\bar{\sg}_1 \ldots \bar{\sg_n}$ and 
$\bar{\sg}_i =1$.  Since $\tau$ starts with $1$, it is easy to see that 
any $\tau$-match in $\bar{\sg}$ must either occur weakly to 
the right of $\bar{\sg}_i$ or strictly to left of $\bar{\sg}_i$. That is, 
1 can be part of $\tau$-match in $\bar{\sg}$ only if the $\tau$-match 
starts at position $i$.  If a $\tau$-match occurred weakly to the right of $\bar{\sg}_i$, then that $\tau$-match would correspond to a cycle-$\tau$-match in $C_k$ in $\sg$. 

Next suppose that the $\tau$-match occurred strictly to the left of 
$\bar{\sg}_i =1$. Then we claim that we can make a similar 
argument with respect to the cycles $C_1 \cdots C_{k-1}$. That is, 
suppose that $C_{k-1}$ starts with $m$. Then $m$ must be the smallest 
element among $\bar{\sg}_1 \ldots \bar{\sg}_{j-1}$. Suppose that 
$\bar{\sg}_s =m$ where $1 \leq s < j$. Then again we can argue 
that any $\tau$-match in $\bar{\sg}_1 \ldots \bar{\sg}_{j-1}$ must occur 
either weakly to the right of $\bar{\sg}_s$ or strictly to left of 
$\bar{\sg}_s$. If the $\tau$-match in $\bar{\sg}_1 \ldots \bar{\sg}_{j-1}$ occurs weakly to the right of $\bar{\sg}_s$, then it would correspond to a 
 cycle-$\tau$-match 
in $C_{k-1}$.  Continuing on in this way, we see that any $\tau$-match 
in $\bar{\sg}$ must correspond to a cycle $\tau$-match in $C_i$ for 
some $i$. 

Vice versa, it is easy to see that since $\tau$ starts with $1$, 
the only way that a cycle-$\tau$-match in $C_i$ can involve 
the smallest element $c_{0,i}$ in the cycle $C_i$ is if $c_{0,i}$ corresponds 
to the 1 in  $\tau$ in cycle match. But this easily implies that any 
$\tau$-cycle match in $C_i$ must also correspond to a $\tau$-match in the 
elements of $\bar{\sg}$ corresponding to $C_i$. 

Thus we have proved that for any $\sg$, $\sg$ has cycle-$\tau$-match 
if only if $\bar{\sg}$ has a $\tau$-match.  
\end{proof}

We should note that if a permutation $\tau$ does not start 
with 1, then it may be that case that $ncmS_n(\tau) \neq nmS_n(\tau)$. 
For example, $\tau = ~3~1~4~2$ is the smallest permutation such 
that neither $\tau$, $\tau^r$, $\tau^c$, nor $(\tau^r)^c$ starts 
with one. For example, even though we do not know how to 
compute closed forms for 
$NCM(t)$ and $NM(t)$, we have computed the following table.

\begin{center}
\begin{tabular}{|c|c|c|c|} 
\hline 
$n$ & $L_n^{ncm}(3142)$ & $NCM_n(3142)$ & $NM_n(3142)$\\
\hline
1& 1 & 1 & 1\\
\hline
2 & 1 & 2 & 2\\
\hline
3 & 2 & 6 & 6 \\
\hline
4 & 5 & 23 & 23 \\
\hline
5 & 20  & 110 & 110 \\
\hline
6 & 92 & 632 & 632 \\
\hline
7 & 532 & 4236 & 4237 \\
\hline  
8  & 3565 & 32448  & 32465\\
\hline
\end{tabular}
\end{center}

One consequence of Lemma \ref{key1} is that we can automatically 
obtain refinements of generating functions for 
the number of permutations with no $\tau$-matches when $\tau$ starts 
with 1. That is, let 
\begin{eqnarray*}
NM_{\tau}(t,x) &=& \sum_{n \geq 0} \frac{t^n}{n!} 
\sum_{\sg \in \mathcal{NMS}_n(\tau)} x^{\LtRMin{\sg}} \ \mbox{and} \\
NM_{\tau}(t,x,y) &=& \sum_{n \geq 0} \frac{t^n}{n!} 
\sum_{\sg \in \mathcal{NMS}_n(\tau)} x^{\LtRMin{\sg}} y^{1+\des{\sg}}.
\end{eqnarray*}
Then we have the following corollary of Lemma \ref{key1}.
\begin{corollary} \label{cor:starts1}
If $\tau \in S_j$ and $\tau$ starts with 1, then 
\begin{eqnarray}\label{NM=NCMtxy}
NCM_{\tau}(t,x) &=& NM_{\tau}(t,x) \ \mbox{and} \\
NCM_{\tau}(t,x,y) &=& NM_{\tau}(t,x,y).
\end{eqnarray}
\end{corollary}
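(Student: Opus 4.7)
The plan is to deduce the corollary directly from Lemma \ref{key1} by showing that the map $\sg \mapsto \bar\sg$ is a bijection from $\mathcal{NCMS}_n(\tau)$ onto $\mathcal{NMS}_n(\tau)$ which respects the relevant statistics. First I would verify that $\sg \mapsto \bar\sg$ is a bijection on all of $S_n$. Given any $w \in S_n$, let $\bar\sg_{i_1}, \ldots, \bar\sg_{i_k}$ be the left-to-right minima of $w$ read from left to right. Since $w_{i_1} = w_1$ and the left-to-right minima are strictly decreasing, the cuts immediately before each $w_{i_r}$ partition $w$ into $k$ blocks, and we can recover a permutation $\sg$ by interpreting the $r$-th block as the cycle $C_{k-r+1}$ (so that cycles are listed by decreasing smallest element, in accordance with the convention used to define $\bar\sg$). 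Reindexing the cycles in the standard order (by increasing smallest element) gives a unique $\sg \in S_n$ with $\bar\sg = w$.

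Next I would collect the three assertions of Lemma \ref{key1} and turn them into statements about the relevant generating functions. By part (1) of the lemma, $\cyc{\sg}$ equals the number of left-to-right minima of $\bar\sg$; by part (2), $\cdes{\sg} = 1 + \des{\bar\sg}$; and by part (3), $\sg$ has no cycle $\tau$-matches if and only if $\bar\sg$ has no $\tau$-matches. Consequently the bijection $\sg \mapsto \bar\sg$ restricts to a bijection between $\mathcal{NCMS}_n(\tau)$ and $\mathcal{NMS}_n(\tau)$, and under this restriction the pair of statistics $(\cyc{\sg},\cdes{\sg})$ on the left corresponds to $(\LtRMin{\bar\sg}, 1+\des{\bar\sg})$ on the right.

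Finally I would assemble the generating-function identities. Summing $x^{\cyc{\sg}}$ over $\sg \in \mathcal{NCMS}_n(\tau)$ equals summing $x^{\LtRMin{\bar\sg}}$ over $\bar\sg \in \mathcal{NMS}_n(\tau)$, giving $NCM_{\tau}(t,x) = NM_{\tau}(t,x)$. Similarly, summing $x^{\cyc{\sg}} y^{\cdes{\sg}}$ equals summing $x^{\LtRMin{\bar\sg}} y^{1+\des{\bar\sg}}$, yielding $NCM_{\tau}(t,x,y) = NM_{\tau}(t,x,y)$.

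The content of the argument lies entirely in Lemma \ref{key1}, so there is no real obstacle beyond verifying that the assembled map is a genuine bijection. The only point requiring care is the inversion step: one must check that the canonical ordering of cycles (by increasing smallest element, with smallest element written first) exactly matches the block structure induced by reading left-to-right minima, so that the correspondence is well-defined and two-sided. Once this is in place, the corollary is immediate.
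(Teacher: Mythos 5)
Your proposal is correct and is precisely the argument the paper intends: the corollary is stated as an immediate consequence of Lemma \ref{key1}, with the map $\sg \mapsto \bar{\sg}$ (cycles written with smallest element first, listed by decreasing smallest elements, parentheses erased) serving as the statistic-preserving bijection, and your inversion via cutting before left-to-right minima is the standard verification that this map is bijective. Nothing is missing.
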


Then by Theorem \ref{Fundrefined} and Lemma \ref{key1}, if 
$\tau \in S_j$ and $\tau$ starts with 1, we have 
that 
\begin{eqnarray*} 
NM_{\tau}(t,1)  &=& \sum_{n \geq 0} NM_n(\tau) \frac{t^n}{n!} \\
&=& NCM(t,1) \\
&=& e^{\sum_{m \geq 1} L_m^{ncm}(\tau) \frac{t^m}{m!}}
\end{eqnarray*}
so that  
\begin{equation}\label{eq:1231}
ln(NM_{\tau}(t,1)) = \sum_{m \geq 1} L_m^{ncm}(\tau) \frac{t^m}{m!}.
\end{equation}
But then 
\begin{eqnarray}\label{eq:1232}
NM(t,x) &=& NCM(t,x) \\
 &=& e^{x\sum_{m \geq 1} L_m^{ncm}(\tau) \frac{t^m}{m!}} \nonumber \\
&=& e^{x \ln(NM_{\tau}(t,1))} = (NM_{\tau}(t,1))^x
\end{eqnarray}
Thus if we can compute  $NM_{\tau}(t,1)$ for a permutation $\tau \in S_j$ that 
starts with 1, we automatically can compute $NM_{\tau}(t,x)$.
For example, 
Goulden and Jackson \cite{GJ} proved
that when $\tau = 1~2 \ldots k$, then
\begin{equation}\label{eq:GJ}
NM_\tau(t)= \frac{1}{\sum_{i \geq 0} \frac{t^{ki}}{(ki)!} -
\frac{t^{ki+1}}{(ki+1)!}}.
\end{equation}
Hence, we automatically have the following refinement of Goulden 
and Jackson's result. 
\begin{theorem} 
If  $\tau = 12\ldots k$ where $k \geq 2$, then  
\begin{equation}
NM_{j \ldots 2~1}(t,x) = \left( \frac{1}{\sum_{i \geq 0} \frac{t^{ki}}{(ki)!} -
\frac{t^{ki+1}}{(ki+1)!}}\right)^x.
\end{equation}
\end{theorem}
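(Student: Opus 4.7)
The plan is to combine the refinement identity \eqref{eq:1232} with the Goulden--Jackson formula \eqref{eq:GJ}; virtually all of the work has already been done in the lead-up to the theorem.

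First I would note that $\tau = 12 \ldots k$ starts with $1$, so Corollary \ref{cor:starts1} applies and yields $NM_\tau(t,x) = NCM_\tau(t,x)$. Substituting this into the exponential form \eqref{expncmUtx} of Theorem \ref{Fundrefined} gives
\[
NM_\tau(t,x) \;=\; \exp\!\Bigl(x \sum_{m \ge 1} L_m^{ncm}(\tau)\,\frac{t^m}{m!}\Bigr).
\]
Setting $x = 1$ and taking logarithms identifies the exponent with $\ln NM_\tau(t,1)$, so re-exponentiating for general $x$ yields
\[
NM_\tau(t,x) = \bigl(NM_\tau(t,1)\bigr)^x,
\]
which is exactly \eqref{eq:1232}.

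Invoking the Goulden--Jackson formula \eqref{eq:GJ}, for $\tau = 12 \ldots k$ we have
\[
NM_\tau(t,1) = NM_\tau(t) = \frac{1}{\sum_{i \ge 0} \frac{t^{ki}}{(ki)!} - \frac{t^{ki+1}}{(ki+1)!}},
\]
and raising this identity to the $x$-th power produces the desired closed form. The reversed subscript on the left-hand side of the statement is accommodated by the closure of m-Wilf equivalence classes under reverse (so $NM_\tau(t) = NM_{\tau^r}(t)$), with the $x$-refinement inheriting this symmetry via Lemma \ref{key1} and the bijection $\sigma \mapsto \bar\sigma$, which identifies left-to-right minima with cycle counts.

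There is essentially no genuine obstacle; the theorem is an immediate corollary of Corollary \ref{cor:starts1} and the Goulden--Jackson theorem. The only calculation worth spelling out carefully is the re-exponentiation step, and that has already been written out in \eqref{eq:1231}--\eqref{eq:1232}, leaving only a one-line substitution.
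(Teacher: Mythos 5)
Your proposal is correct and follows exactly the paper's own route: the authors derive $NM_\tau(t,x) = (NM_\tau(t,1))^x$ from Corollary \ref{cor:starts1} and Theorem \ref{Fundrefined} in equations \eqref{eq:1231}--\eqref{eq:1232}, then substitute the Goulden--Jackson formula \eqref{eq:GJ}, just as you do. Your remark about the reversed subscript correctly diagnoses what is merely a notational slip in the paper's statement of the theorem.
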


An example, where one can use the full power of Theorem \ref{genexp} is 
the following.   In section 2, we shall show that 
\begin{equation}\label{eq:132}
\sum_{n \geq 1} \frac{t^n}{n!} \sum_{C \in \mathcal{L}_n^{ncm}(132)} 
y^{\cdes{C}} = ln\left( \frac{1}{1-y\int_{0}^t e^{(1-y)s - y\frac{s^2}{2}}ds}
\right).
\end{equation}
Then it follows that 
\begin{eqnarray}
NCM(t,x,y) &=& \sum_{n \geq 0} \frac{t^n}{n!} \sum_{\sg \in S_n} x^{cyc(\sg)}
y^{\cdes{\sg}}\\
&=& \sum_{n\geq 0} \frac{t^n}{n!} \sum_{k=1}^n x^k \sum_{\sg \in 
\mathcal{NCMS}_{n,k}(\tau)} y^{\cdes{\sg}} \nonumber \\
&=& 
e^{x\ ln\left( \frac{1}{1-y\int_{0}^t e^{(1-y)s - y\frac{s^2}{2}}ds}\right)} \nonumber \\
&=& \left( \frac{1}{1-y\int_{0}^t e^{(1-y)s - y\frac{s^2}{2}}ds}\right)^x. \nonumber
\end{eqnarray}

The outline of this paper is as follows. 
In Section 2, we determine the generating function 
$CA_\tau(t,x,y)$ and $NCM_\tau(t,x,y)$ for all $\tau \in S_3$ as well 
as compute $CA_\Upsilon(t,x,y)$ and $NCM_\Upsilon(t,x,y)$
for certain subsets $\Upsilon \subseteq S_3$. In 
section 3, we shall compute $NCM_\tau(t,x,y)$ for all 
$\tau = \tau_1 \ldots \tau_j \in S_j$ where $\tau_1 =$ and 
$\tau_j =2$ and for all $\tau = \tau_1 \ldots \tau_{j+p} \in S_{j+p}$ of the  
form $\tau = 1~2 \ldots J-1~\gamma~j$ where $j \geq 3$ and 
$\gamma$ is a permutation of $j+1, \ldots ,j+p$.  Finally, 
in Section 4, we shall briefly describe two other approaches 
to computing the generating function  $NCM_\tau(t,x,y)$.

\section{Patterns of length 3}

In this section, we study $CA_{\tau}(t,x,y)$ 
and $NCM_{\tau}(t,x,y)$ for $\tau \in S_3$.

First we consider  $CA_{\tau}(t,x)$ for $\tau \in S_3$. 
It follows from our remarks in 
the introduction that both  cycle avoidance Wilf equivalence and 
cycle matching Wilf equivalence are closed under the operation 
of reverse and complement. Thus  
\begin{enumerate}
\item $1~2~3 \sim_{ca} 3~2~1$ and $1~2~3 \sim_{cm} 3~2~1$ and   
\item $1~3~2 \sim_{ca} 2~3~1 \sim_{ca} 2~1~3 \sim_{ca} 3~1~2$ and  
$1~3~2 \sim_{cm} 2~3~1 \sim_{cm} 2~1~3 \sim_{cm} 3~1~2$.
\end{enumerate}
Now since cycle avoidance Wilf equivalence is closed 
under cycle rearrangements, it follows that  
$1~2~3 \sim_{ca} 2~3~1$ which means that all permutations 
of length three are cycle avoidance Wilf equivalent. 
Thus for all permutations $\tau$ of length three, we have 
\begin{equation*}
CA_{\tau}(t) = CA_{123}(t) = e^{e^t-1}.
\end{equation*}
But since 
$$CA_{\tau}(t) =e^{\sum_{m \geq 1} L_m^{ca}(\tau) \frac{t^m}{m!}}$$
for all $\tau \in S_3$, it must be the case that  
\begin{equation*}
\sum_{m \geq 1} L_m^{ca}(\tau)\frac{t^m}{m!} = e^t-1
\end{equation*}
for all $\tau \in S_3$ and, hence,  
$$
CA_{\tau}(t,x) = e^{x \sum_{m \geq 1} L_m^{ca}(\tau)\frac{t^m}{m!}} = 
e^{x(e^t-1)}
$$ 
for all $\tau \in S_3$. 
However it is not the case that the generating functions 
$CA_{\tau}(t,x,y)$ are equal for all $\tau \in S_3$. That is, suppose 
that  $\alpha$ is a cyclic rearrangement of $\beta$. 
Then it is easy to see that $\mathcal{L}^{ca}_m(\alpha) = 
\mathcal{L}^{ca}_m(\beta)$ for all $m \geq 1$ 
so that 
\begin{equation}
\sum_{C \in \mathcal{L}^{ca}_m(\alpha)}y^{\cdes{C}} = 
\sum_{C \in \mathcal{L}^{ca}_m(\beta)}y^{\cdes{C}}.
\end{equation}
But then it follows from Theorem \ref{Fundrefined} that we must 
have 
$CA_{\alpha}(t,x,y) = CA_{\beta}(t,x,y)$. It thus follows that 
from our results in the introduction that 
\begin{equation*}
CA_{123}(t,x,y)= CA_{312}(t,x,y)= CA_{231}(t,x,y)= e^{x\left(yt+ \frac{1}{y}(e^{yt}-1-yt)\right)}.
\end{equation*}

Next consider $\tau = 1~3~2$. It is easy to see that 
for $k \geq 3$, the only $k$-cycle which cycle avoids  
$\tau$ is the cycle $(1,2,\ldots,k)$. Thus  
\begin{equation*}
\sum_{C \in \mathcal{L}^{ca}_m(132)} y^{\cdes{C}} =y,
\end{equation*} 
for all $k \geq 1$. 
Hence 
\begin{equation*}
CA_{132}(t,x,y) = CA_{213}(t,x,y)= CA_{321}(t,x,y)= 
e^{x \left(\sum_{m \geq 1} \frac{yt^m}{m!}\right)} = 
e^{xy(e^{t}-1)}.
\end{equation*}

Next we shall consider the generating functions 
$NCM_\tau(t,x,y)$ for $\tau \in S_3$. We claim 
that is enough to compute $NCM_{123}(t,x,y)$ and 
$NCM_{132}(t,x,y)$.  That is, for any $j \geq 2$ and 
$\tau \in S_j$,  we can compute $NCM_{\tau^r}(t,x,y)$ and 
$NCM_{\tau^c}(t,x,y)$ from  $NCM_\tau(t,x,y)$.  
Note that it follows from Theorem \ref{Fundrefined} that 
\begin{equation}\label{trans:tau}
\sum_{n \geq 1} \frac{t^n}{n!} \sum_{C \in \mathcal{L}^{ncm}_n(tau)} 
y^{\cdes{C}} = ln\left(NCM_{\tau}(t,1,y)\right).
\end{equation}
Since $\sum_{C \in \mathcal{L}^{ncm}_1(123)} y^{\cdes{C}} =y$, it 
follows that 
\begin{equation}\label{transtau:2}
\sum_{n \geq 2} \frac{t^n}{n!} \sum_{C \in \mathcal{L}^{ncm}_n(\tau)} 
y^{\cdes{C}} = ln\left(NCM_{\tau}(t,1,y)\right)-yt.
\end{equation}
Given any $n$-cycle $C$ in $S_n$, let $C^{cr}$ denotes its cycle-reverse and 
$C^{cc}$ denotes its cycle-complement.  Then  
$C \in \mathcal{L}^{ncm}_n(\tau)$ if and only if 
$C^{cr} \in \mathcal{L}^{ncm}_n(\tau^r)$ and 
$C \in \mathcal{L}^{ncm}_n(\tau)$ if and only if 
$C^{cc} \in \mathcal{L}^{ncm}_n(\tau^c)$
Now if $n \geq 2$, then it is easy to see that  
$ n -\cdes{C} = \cdes{C^{cr}} = \cdes{C^{cc}}$. That is, 
each descent as we read once around the cycle $C$ becomes 
a rise as we read around the cycles of $C^{cr}$ and $C^{cc}$ and 
each rise as we read once around the cycle $C$ becomes 
a descent as we read around the cycles of $C^{cr}$ and $C^{cc}$.
Note, however, that if $C$ is a one-cycle, 
then $C^{cr} =C^{cc} = C$ and $\cdes{C} = \cdes{C^{cr}} = \cdes{C^{cc}}= 1$ so 
that it is not 
the case that $\cdes{C^{cr}} = \cdes{C^{cr}}=  1 -\cdes{C}$. 
Thus we have to treat 
the one-cycles separately. Thus we have that  
\begin{eqnarray*}
 \sum_{n \geq 2} \frac{t^n}{n!} \sum_{C \in \mathcal{L}^{ncm}_n(\tau)} 
y^{n-\cdes{C}} &=&  \sum_{n \geq 2} \frac{t^n}{n!} \sum_{C \in \mathcal{L}^{ncm}_n(\tau^r)} 
y^{\cdes{C}} \\
&=& \sum_{n \geq 2} \frac{t^n}{n!} \sum_{C \in \mathcal{L}^{ncm}_n(\tau^c) }
y^{\cdes{C}}.
\end{eqnarray*}
It follows that if $\tau \in S_j$ where $j \geq 2$ and 
\begin{equation}
G(t,y) = \sum_{n \geq 2} \frac{t^n}{n!} \sum_{C \in \mathcal{L}^{ncm}_n(\tau)} 
y^{\cdes{C}},
\end{equation}
then 
\begin{equation}
G(ty,y^{-1}) =  \sum_{n \geq 2} \frac{t^n}{n!} 
\sum_{C \in \mathcal{L}^{ncm}_n(\tau^r)} 
y^{\cdes{C}} =  \sum_{n \geq 2} \frac{t^n}{n!} 
\sum_{C \in \mathcal{L}^{ncm}_n(\tau^c)} 
y^{\cdes{C}}. 
\end{equation}
Thus by (\ref{transtau:2}), we have that 
\begin{eqnarray*}
ln\left( NCM_{\tau}(ty,1,y^{-1})\right) -t &=& \sum_{n \geq 2} \frac{t^n}{n!} 
\sum_{C \in \mathcal{L}^{ncm}_n(\tau^r)} 
y^{\cdes{C}} \\
&=& \sum_{n \geq 2} \frac{t^n}{n!} 
\sum_{C \in \mathcal{L}^{ncm}_n(\tau^c)} 
y^{\cdes{C}}
\end{eqnarray*}
so that 
\begin{eqnarray*}
ty-t + ln\left( NCM_{\tau}(ty,1,y^{-1})\right) 
&=& \sum_{n \geq 1} \frac{t^n}{n!} 
\sum_{C \in \mathcal{L}^{ncm}_n(\tau^r)} 
y^{\cdes{C}} \\
&=& \sum_{n \geq 1} \frac{t^n}{n!} 
\sum_{C \in \mathcal{L}^{ncm}_n(\tau^c)} 
y^{\cdes{C}}
\end{eqnarray*}
Then we can apply Theorem \ref{Fundrefined} to obtain the following 
result.

\begin{theorem}\label{tautotaurc} 
Let $\tau \in S_j$ where $j \geq 2$. Then 
\begin{equation}
NCM_{\tau^r}(t,x,y) = NCM_{\tau^c}(t,x,y) = 
e^{x(yt-t +ln(NCM_{\tau}(ty,1,y^{-1})))}.
\end{equation}
\end{theorem}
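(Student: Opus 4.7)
The plan is to combine two ingredients that have already been assembled in the discussion preceding the theorem: the exponential formula of Theorem \ref{Fundrefined} and the bijection identity
\begin{equation*}
ty - t + \ln\bigl(NCM_{\tau}(ty,1,y^{-1})\bigr) = \sum_{n \geq 1} \frac{t^n}{n!} \sum_{C \in \mathcal{L}^{ncm}_n(\tau^r)} y^{\cdes{C}} = \sum_{n \geq 1} \frac{t^n}{n!} \sum_{C \in \mathcal{L}^{ncm}_n(\tau^c)} y^{\cdes{C}},
\end{equation*}
which was derived via the cycle-reverse and cycle-complement bijections together with the $\cdes$-complementation $\cdes{C^{cr}} = \cdes{C^{cc}} = n - \cdes{C}$ valid for $n$-cycles with $n \geq 2$.

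To finish the proof, I would simply specialize $\Upsilon = \tau^r$ (resp.\ $\Upsilon = \tau^c$) in Theorem \ref{Fundrefined}, which gives
\begin{equation*}
NCM_{\tau^r}(t,x,y) = \exp\Bigl( x \sum_{m \geq 1} \frac{t^m}{m!} \sum_{C \in \mathcal{L}^{ncm}_m(\tau^r)} y^{\cdes{C}} \Bigr),
\end{equation*}
and then substitute the displayed identity into the exponent. This produces $NCM_{\tau^r}(t,x,y) = e^{x(yt-t+\ln(NCM_{\tau}(ty,1,y^{-1})))}$, and the identical substitution (using the $\tau^c$ bijection) handles $\tau^c$.

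For self-containment, I would briefly re-derive the key identity as follows. Applying Theorem \ref{Fundrefined} to $\tau$ itself and taking logarithms yields $\ln(NCM_{\tau}(t,1,y)) = \sum_{n \geq 1} \frac{t^n}{n!} \sum_{C \in \mathcal{L}^{ncm}_n(\tau)} y^{\cdes{C}}$. Substituting $t \mapsto ty$ and $y \mapsto y^{-1}$ converts each summand $\frac{t^n}{n!} y^{\cdes{C}}$ into $\frac{t^n}{n!} y^{n - \cdes{C}}$. For $n \geq 2$, the cycle-reverse map $C \mapsto C^{cr}$ is a bijection from $\mathcal{L}^{ncm}_n(\tau)$ onto $\mathcal{L}^{ncm}_n(\tau^r)$ with $\cdes{C^{cr}} = n - \cdes{C}$, so the $n \geq 2$ tail of the transformed series equals the $n \geq 2$ tail of the desired $\tau^r$ series; an identical statement holds for $\tau^c$ via the cycle-complement map.

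The one mild obstacle is the behavior at $n = 1$: the unique one-cycle is fixed by both $C \mapsto C^{cr}$ and $C \mapsto C^{cc}$, and always carries $\cdes = 1$, so it does \emph{not} satisfy $\cdes{C^{cr}} = n - \cdes{C}$. Handling this separately, the $n=1$ term in the transformed series is $t \cdot y^{1-1} = t$, whereas the $n=1$ term in the $\tau^r$ and $\tau^c$ series is $ty$. Adding the correction $yt - t$ to the left-hand side accounts exactly for this discrepancy, yielding the identity and thereby the theorem.
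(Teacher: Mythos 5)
Your proposal is correct and follows essentially the same route as the paper: the displayed identity you cite is exactly the one derived in the text immediately preceding the theorem (via the cycle-reverse and cycle-complement bijections, the relation $\cdes{C^{cr}}=\cdes{C^{cc}}=n-\cdes{C}$ for $n\geq 2$, and the separate treatment of the one-cycles), and the final step of exponentiating via Theorem \ref{Fundrefined} is the paper's conclusion as well.
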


Next we shall show that we can find an explicit expression 
$NCM_{123}(t,x,y)$ using some results of Mendes and Remmel 
\cite{MenRem2}.  Suppose that we want to compute the generating function 
\begin{eqnarray}\label{gf:NCM}
NCM_{\tau}(t,x,y) &=&  \sum_{n \geq 0} 
\frac{t^n}{n!} \sum_{\sg \in \mathcal{NCMS}_n(\tau)} 
x^{cyc(\sg)} y^{\cdes{\sg}} \\
&=& e^{x\sum_{n\geq 1} \frac{t^n}{n!} \sum_{C \in 
\mathcal{L}^{ncm}_n(\tau)} y^{\cdes{\sg}}} \nonumber
\end{eqnarray}
in the case where $\tau$ starts with 1. 
Then by Corollary \ref{cor:starts1}, we know that  
\begin{equation}\label{gf:NCM2}
NCM_{\tau}(t,x,y) = NM_{\tau}(t,x,y)= \sum_{n \geq 0} 
\frac{t^n}{n!} \sum_{\sg \in \mathcal{NMS}_n(\tau)} 
x^{LtRMin(\sg)} y^{1+des(\sg)}.
\end{equation}
Now suppose that we can compute 
\begin{equation}\label{eq:1k0}
NM_{\tau}(t,1,y)= \sum_{n \geq 0} 
\frac{t^n}{n!} \sum_{\sg \in \mathcal{NMS}_n(\tau)} y^{1+des(\sg)}.
\end{equation}
Then we know that 
$$
NM_{\tau}(t,1,y) =   e^{\sum_{n\geq 1} \frac{t^n}{n!} \sum_{C \in 
\mathcal{L}^{ncm}_n(\tau)} y^{\cdes{\sg}}}
$$ 
so that 
$$ \sum_{n\geq 1} \frac{t^n}{n!} \sum_{C \in 
\mathcal{L}^{ncm}_n(\tau)} y^{\cdes{\sg}} = 
ln\left(NM_{\tau}(t,1,y)\right).
$$
But then it follows that 
\begin{equation}\label{eq:1k1}
NCM_{\tau}(t,x,y) = NM_{\tau}(t,x,y) = 
e^{x\ ln\left(NM_{\tau}(t,1,y)\right)}.
\end{equation}
Thus we need only compute (\ref{eq:1k0}). However, 
Mendes and Remmel \cite{MenRem2} proved the following theorem.

\begin{theorem} \label{theorem:j21} 
If $\tau = j \ldots 2~1$ where $j \geq 2$, then  
\begin{equation}\label{eq:MR}
\sum_{n \geq 0} \frac{t^n}{n!} \sum_{\sg \in \mathcal{NMS}_n(\tau)} 
y^{\des{\sg}}
= \left( \sum_{n \geq 0} \frac{t^n}{n!} \sum_{i \geq 0} (-1)^i  
\EuScript{R}_{n-1,i,j-1} y^i \right)^{-1}
\end{equation}
where $\EuScript{R}_{n,i,j}$ is the number of rearrangements of $i$ zeroes and
$n-i$ ones such that $j$ zeroes never appear consecutively.
\end{theorem}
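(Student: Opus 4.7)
My plan is to apply the ring homomorphism method of Mendes and Remmel to the classical symmetric function identity $\sum_{n \geq 0} h_n t^n = \left( \sum_{n \geq 0} (-1)^n e_n t^n \right)^{-1}$. Define a ring homomorphism $\theta: \Lambda \to \mathbb{Q}(y)[[t]]$ on the algebra of symmetric functions by $\theta(e_0) = 1$ and, for $n \geq 1$,
\begin{equation*}
\theta(e_n) = \frac{(-1)^n}{n!} \sum_{i \geq 0} (-1)^i \EuScript{R}_{n-1,i,j-1}\, y^i.
\end{equation*}
Applying $\theta$ to the above identity instantly gives
\begin{equation*}
\sum_{n \geq 0} \theta(h_n) t^n \;=\; \left( \sum_{n \geq 0} \frac{t^n}{n!} \sum_{i \geq 0} (-1)^i \EuScript{R}_{n-1,i,j-1}\, y^i \right)^{-1},
\end{equation*}
so the theorem will follow once I prove the combinatorial identity
\begin{equation*}
n!\,\theta(h_n) \;=\; \sum_{\sigma \in \mathcal{NMS}_n(j\cdots 21)} y^{\des{\sigma}}.
\end{equation*}

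To establish this, I would first expand $h_n$ in the $e$-basis via the composition formula $h_n = \sum_{(n_1, \ldots, n_k) \models n} (-1)^{n-k} e_{n_1} \cdots e_{n_k}$, which follows from expanding $H(t) = 1/(1 - \sum_{m \geq 1}(-1)^{m-1} e_m t^m)$ as a geometric series. Applying $\theta$ and multiplying by $n!$ converts the multinomial coefficient $n!/(n_1!\cdots n_k!)$ into a count of ordered set partitions of $[n]$ into blocks of sizes $n_1, \ldots, n_k$. I interpret this combinatorially as the signed sum, over \emph{filled brick tabloids} of shape $n$, of the following objects: an ordered sequence of bricks $B_1,\dots,B_k$ partitioning $[n]$, together with a $0$/$1$-labeling of the $n_i - 1$ interior gaps of each brick $B_i$ in which no $j-1$ consecutive $0$'s appear. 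The sign is $(-1)^{k + \sum a_i}$ and the weight is $y^{\sum a_i}$, where $a_i$ is the number of $0$'s inside brick $B_i$.

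The crucial combinatorial step is the sign-reversing involution. I would arrange the elements of each brick in the unique order for which $0$'s mark descent positions and $1$'s mark ascent positions (recording the labeled brick as a segment of a permutation), so the labeled brick tabloid encodes a permutation $\sigma \in S_n$ together with a choice of brick boundaries and a labeling. The forbidden configuration ``$j-1$ consecutive $0$'s within a single brick'' translates precisely to ``no occurrence of $j\cdots 1$ strictly inside any one brick,'' since $j-1$ consecutive descents is exactly a $j\cdots 21$-match. I would then define an involution $\phi$ by scanning from left to right and looking for the first location where either (i)~a brick boundary can be removed while keeping the labeling legal, or (ii)~a brick can be split at a chosen descent position; $\phi$ toggles between these two situations. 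This pairing reverses $(-1)^k$ while preserving the $y$-weight, so the cancellation leaves exactly the fixed points: those configurations in which every descent of $\sigma$ is forced to be a brick boundary and every brick is as long as possible. Such fixed points are in bijection with permutations $\sigma$ having no $j\cdots 21$-match, weighted by $y^{\des{\sigma}}$ (since each brick contributes the ascent/descent structure authentically).

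The main obstacle is the precise design of the involution $\phi$: one must specify deterministically which brick boundary or which interior $0$ is acted upon, verify that $\phi^2 = \mathrm{id}$, check that $\phi$ swaps sign but preserves $y$-weight, and confirm that the set of fixed points is exactly $\mathcal{NMS}_n(j\cdots 21)$ with its trivial labeling. Every other step in this plan is mechanical; only the involution requires genuine combinatorial care, and it is here that the restriction to the \emph{decreasing} pattern $j\cdots 21$ (so that a match corresponds to $j-1$ consecutive descents) is essential.
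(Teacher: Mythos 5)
The paper itself does not prove this statement: it is imported verbatim from Mendes and Remmel \cite{MenRem2} (``However, Mendes and Remmel \cite{MenRem2} proved the following theorem''), so there is no in-paper argument to compare against. Your overall strategy --- apply a ring homomorphism $\theta$ with $\theta(e_n)=\frac{(-1)^n}{n!}\sum_i(-1)^i\EuScript{R}_{n-1,i,j-1}y^i$ to $H(t)=1/E(-t)$, expand $h_n$ in the $e$-basis, and kill the signed objects with an involution --- is exactly the homomorphism method of the cited source (and the method sketched in Section 4 of this paper), so you have chosen the right template. But as written the plan has a genuine gap beyond the deferred involution.

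The flawed step is the claim that you can ``arrange the elements of each brick in the unique order for which $0$'s mark descent positions and $1$'s mark ascent positions.'' Given a set of $n_i$ integers and a prescribed $0/1$ word on its $n_i-1$ gaps, the number of orderings realizing that word as its descent word is the Eulerian-type count $\beta_{n_i}(D)$, which exceeds $1$ for every $D$ other than $\emptyset$ and $[n_i-1]$ (e.g.\ both $2\,1\,3$ and $3\,1\,2$ realize the word $01$ on $\{1,2,3\}$). So your passage from (ordered set partition, gap-labelings) to (permutation, brick decomposition) is not a bijection, and the subsequent bookkeeping inherits the error: at your proposed fixed points every descent is a brick boundary, so every interior gap is labeled $1$, the $y$-weight is $y^0$ and the sign is $(-1)^k$ with $k$ the number of bricks --- these can be negative and carry no power of $y$, so they cannot sum to $\sum_{\sg}y^{\des{\sg}}$. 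The correct interpretation (as in \cite{MenRem2}) fills each brick monotonically, places the $y$'s and $-1$'s on cells rather than on a freely chosen descent word, and designs the combine/split involution around that labeling; the restriction to the decreasing pattern $j\cdots 2\,1$ enters exactly where you say it does, but the object you are cancelling over must be rebuilt before the involution can even be defined. Finally, be aware that the low-order conventions in \eqref{eq:MR} (the $n=0,1$ terms of the right-hand series and the normalization $y^{\des{\sg}}$ versus $y^{1+\des{\sg}}$) need care: checking $j=2$ against $\mathcal{NMS}_n(21)=\{\mathrm{id}\}$ shows the identity only holds under the conventions of \cite{MenRem2}, so any proof must fix these before the verification $n!\,\theta(h_n)=\sum_{\sg}y^{\des{\sg}}$ can go through.
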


Replacing $y$ by $1/y$ and then replacing $t$ by $yt$ in (\ref{eq:MR}) 
yields
\begin{equation}\label{eq:MR1}
\sum_{n \geq 0} \frac{t^n}{n!} \sum_{\sg \in \mathcal{NMS}_n(\tau)} 
y^{n-\des{\sg}}
= \left( \sum_{n \geq 0} \frac{t^n}{n!} \sum_{i \geq 0} (-1)^i  
\EuScript{R}_{n-1,i,j-1} y^{n-i} \right)^{-1}.
\end{equation}
It is easy to see that if $\sg \in S_n$ has no 
$j \ldots 2~1$-matches, then the reverse of $\sg$, $\sg^r$ has 
no $1~2 \ldots j$-matches and that $n -\des{\sg}$ equals 
$1+\des{\sg^r}$.  Thus it follows that if $\alpha = 1~2 \ldots j$, then 
\begin{equation}\label{eq:MR2}
\sum_{n \geq 0} \frac{t^n}{n!} \sum_{\sg \in \mathcal{NMS}_n(\alpha)} 
y^{1+\des{\sg}}
= \left( \sum_{n \geq 0} \frac{t^n}{n!} \sum_{i \geq 0} (-1)^i  
\EuScript{R}_{n-1,i,j-1} y^{n-i} \right)^{-1}.
\end{equation}
Thus we have the following theorem. 

\begin{theorem}\label{thm:12j}
For $j \geq 2$ and $\tau = 12 \ldots j$, 
\begin{eqnarray}\label{gf:NCM3}
NCM_{\tau}(t,x,y) &=&  \sum_{n \geq 0} 
\frac{t^n}{n!} \sum_{\sg \in \mathcal{NCMS}_n(\tau)} 
x^{cyc(\sg)} y^{\cdes{\sg}} \\
&=& e^{x\ ln\left( \frac{1}{\sum_{n \geq 0} 
\frac{t^n}{n!} \sum_{i \geq 0} (-1)^i  
\EuScript{R}_{n-1,i,j-1} y^{n-i}}\right)} \nonumber \\
&=& \left( \frac{1}{\sum_{n \geq 0} 
\frac{t^n}{n!} \sum_{i \geq 0} (-1)^i  
\EuScript{R}_{n-1,i,j-1} y^{n-i}}\right)^x. \nonumber 
\end{eqnarray}
where $\EuScript{R}_{n,i,j}$ is the number of rearrangements of $i$ zeroes and
$n-i$ ones such that $j$ zeroes never appear consecutively.
\end{theorem}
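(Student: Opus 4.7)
The plan is to chain together the three reductions already set up in the text. First, since $\tau = 1~2\ldots j$ starts with $1$, \cref{starts1} gives the identification $NCM_\tau(t,x,y) = NM_\tau(t,x,y)$, which reduces the problem to a statement about the ordinary (non-cycle) pattern-matching generating function with the cycle statistic $\cyc{\sg}$ replaced by $\LtRMin{\sg}$ and $\cdes{\sg}$ replaced by $1+\des{\sg}$. Next, I would invoke the factorization derived immediately before the theorem: combining \tref{Fundrefined} with \lref{key1} yields
\begin{equation*}
NCM_\tau(t,x,y) \;=\; \left( NM_\tau(t,1,y)\right)^x,
\end{equation*}
so the whole problem collapses to computing $NM_\tau(t,1,y)$, i.e.\ the exponential generating function tracking $1+\des{\sg}$ over permutations with no $1~2\ldots j$-match.

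For this last piece, I would transfer the Mendes--Remmel formula (\tref{theorem:j21}), which handles $\tau = j\ldots 2~1$, to the increasing pattern $\alpha = 1~2\ldots j$ via the reverse map $\sg \mapsto \sg^r$. The key observation is twofold: $\sg$ has no $j\ldots 2~1$-match if and only if $\sg^r$ has no $1~2\ldots j$-match, and $\des{\sg^r} = (n-1) - \des{\sg}$, so $1+\des{\sg^r} = n - \des{\sg}$. Accordingly, I would start with (\ref{eq:MR}) of Mendes--Remmel, substitute $y \mapsto 1/y$, and then multiply through by $y^n$ inside the sum (equivalently, substitute $t \mapsto yt$), as the text does in passing from (\ref{eq:MR}) to (\ref{eq:MR1}) and (\ref{eq:MR2}). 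This produces
\begin{equation*}
NM_\alpha(t,1,y) \;=\; \sum_{n \geq 0}\frac{t^n}{n!}\sum_{\sg \in \mathcal{NMS}_n(\alpha)} y^{1+\des{\sg}} \;=\; \left(\sum_{n\geq 0}\frac{t^n}{n!}\sum_{i\geq 0} (-1)^i \EuScript{R}_{n-1,i,j-1}\, y^{n-i}\right)^{-1}.
\end{equation*}

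Finally, plugging this expression into $NCM_\tau(t,x,y) = (NM_\tau(t,1,y))^x$ yields the claimed formula, and rewriting $(NM_\tau(t,1,y))^x$ as $e^{x\ln(NM_\tau(t,1,y))}$ produces the intermediate line in the statement. The main obstacle, such as it is, is the bookkeeping for the reverse map: one must verify that $\des{\sg^r} = (n-1) - \des{\sg}$ and that the substitution $y \mapsto 1/y$ followed by $t \mapsto yt$ exactly implements the statistic conversion $\des{\sg} \rightsquigarrow 1+\des{\sg^r} = n - \des{\sg}$ on the level of generating functions. Since \cref{starts1} has already been established for all $\tau$ beginning with $1$, and the exponential-structure identity $(NM_\tau(t,1,y))^x$ follows from \tref{Fundrefined} applied to the weight $W_m(C) = y^{\cdes{C}} [C \text{ has no cycle } \tau\text{-match}]$, no further combinatorial work is needed.
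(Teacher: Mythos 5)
Your proposal is correct and follows essentially the same route as the paper: reduce to $NM_\tau(t,1,y)$ via Corollary \ref{cor:starts1} and the exponential-structure identity $NCM_\tau(t,x,y) = (NM_\tau(t,1,y))^x$, then transfer the Mendes--Remmel formula for $j\ldots 2\,1$ to $1\,2\ldots j$ by the reverse map combined with the substitutions $y\mapsto 1/y$ and $t\mapsto yt$. The bookkeeping identity $1+\des{\sg^r} = n-\des{\sg}$ that you flag as the main obstacle is exactly the observation the paper records in passing from its equation (\ref{eq:MR1}) to (\ref{eq:MR2}).
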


Now if $\tau = 123$, then we can obtain a more explicit 
formula for $NCM_{\tau}(t,x,y)$ using the following 
observations of Mendes and Remmel \cite{MenRem2}. That is, suppose that 
we start with a word $w =w_1 \ldots w_n$ which is a sequence in 
$\{0,1\}^*$ with no two consecutive zeros. Then we can 
uniquely factor $w$ by cutting the word before each 0. For example, 
if $w = 11110110111010101110$ then we would factor $w$ as 
$$1111|011|0111|01|01|0111|0.$$
It is easy to see that each such word $w$ is of the form 
$$\{1\}^*\{01^i:i \geq 1\}^*(\epsilon + 0)$$
where $\epsilon$ is the empty word. Thus if $U$ is the set 
of a words in $\{0,1\}^*$ with no two consecutive zeros and we 
weight each word in $w \in U$ by $WT(w) = y^{1(w)}z^{0(w)}t^{|w|}$ where 
$1(w)$ is the number of 1's in $w$, $0(w)$ is the number of 0's in $w$, 
and $|w|$ is the length of $w$, then it follows that 
\begin{eqnarray}\label{ogfU}
U(t,y,z) &=& \sum_{w \in U} WT(w) \nonumber \\
&=& \frac{1}{1-yt}\frac{1}{1- \sum_{n \geq 2} 
y^{n-1}zt^n}(1+zt) \nonumber \\
&=& \frac{1+zt}{(1-yt - yzt^2)}.
\end{eqnarray}
But then it is easy to see that 
\begin{equation}
\sum_{i \geq 0} (-1)^i  
\EuScript{R}_{n-1,i,j-1} y^{n-i} = yU(t,y,-1)|_{t^{n-1}}.
\end{equation}

Thus we have the following corollary  of Theorem \ref{thm:12j}. 

\begin{corollary}\label{cor:123}
\begin{eqnarray}\label{gf:NCM123}
NCM_{123}(t,x,y) &=&  \sum_{n \geq 0} 
\frac{t^n}{n!} \sum_{\sg \in \mathcal{NCMS}_n(123)} 
x^{cyc(\sg)} y^{\cdes{\sg}} \\
&=& e^{x\ ln\left( \frac{1}{\sum_{n \geq 0} 
\frac{t^n}{n!} \left(\frac{y(1-t)}{1-yt+yt^2}|_{t^{n-1}}\right)}\right)}
\nonumber \\
&=& \left( \frac{1}{\sum_{n \geq 0} 
\frac{t^n}{n!} \left(\frac{y(1-t)}{1-yt+yt^2}|_{t^{n-1}}\right)}\right)^x
\end{eqnarray}
\end{corollary}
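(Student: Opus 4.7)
The plan is to specialize Theorem \ref{thm:12j} to the case $j=3$ and then evaluate the resulting inner sum via the rational generating function $U(t,y,z)$ discussed above the corollary. Since the theorem already supplies a closed form for $NCM_{\tau}(t,x,y)$ when $\tau = 1\,2\,\ldots\,j$, and the preceding paragraphs furnish the explicit form of $U(t,y,z)$ and its link to $\EuScript{R}_{n-1,i,j-1}$, the proof is essentially an assembly of these pieces.

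First I would substitute $j=3$ into the formula of Theorem \ref{thm:12j}, obtaining
$$
NCM_{123}(t,x,y) = \left(\frac{1}{\sum_{n \geq 0} \frac{t^n}{n!} \sum_{i \geq 0} (-1)^i \EuScript{R}_{n-1,i,2}\, y^{n-i}}\right)^{x}.
$$
Here the coefficients $\EuScript{R}_{n-1,i,2}$ count binary words of length $n-1$ with exactly $i$ zeroes and no two consecutive zeroes; this is precisely the combinatorial object whose weighted enumeration is captured by $U(t,y,z)$.

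Next I would invoke the identity
$$
\sum_{i \geq 0} (-1)^i \EuScript{R}_{n-1,i,2}\, y^{n-i} \;=\; y\,U(t,y,-1)\big|_{t^{n-1}}
$$
established in the excerpt. Using the closed form $U(t,y,z) = \frac{1+zt}{1-yt-yzt^2}$ and setting $z=-1$ yields
$$
yU(t,y,-1) \;=\; \frac{y(1-t)}{1-yt+yt^2}.
$$
Substituting this back into the denominator gives
$$
\sum_{n \geq 0} \frac{t^n}{n!}\left(\frac{y(1-t)}{1-yt+yt^2}\bigg|_{t^{n-1}}\right),
$$
and plugging this into the specialized formula from Theorem \ref{thm:12j} produces the claimed expression.

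Since every component of the argument (the theorem of Mendes--Remmel, the factorization of two-zero-avoiding binary words, and the resulting rational form of $U$) is already available, no genuine obstacle arises. The only point requiring minor care is the bookkeeping of the shift in the exponent of $y$ — writing $y^{n-i} = y \cdot y^{(n-1)-i}$ — so that the factor of $y$ pulled out of the sum matches the prefactor $y$ appearing in the identification with $yU(t,y,-1)\big|_{t^{n-1}}$.
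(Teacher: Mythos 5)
Your proposal is correct and follows essentially the same route as the paper: specialize Theorem \ref{thm:12j} to $j=3$, identify $\sum_{i \geq 0} (-1)^i \EuScript{R}_{n-1,i,2}\, y^{n-i}$ with $yU(t,y,-1)\vert_{t^{n-1}}$ via the factorization of binary words with no two consecutive zeroes, and substitute the closed form $yU(t,y,-1) = \frac{y(1-t)}{1-yt+yt^2}$. The exponent bookkeeping $y^{n-i} = y\cdot y^{(n-1)-i}$ you flag is exactly the point the paper handles implicitly, so nothing is missing.
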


One can use our generating functions for 
$NCM_{123}(t,x,y)$ to compute the initial values of 
 $L_n^{ncm}(123)$ and  $NCM_n(123)$.

\begin{center}
\begin{tabular}{|c|c|c|} 
\hline 
$n$ & $L_n^{ncm}(123)$ & $NCM_n(123)$ \\
\hline
1& 1 & 1\\
\hline
2 & 1 & 2\\
\hline
3 & 1 & 5 \\
\hline
4 & 3 & 17\\
\hline
5 & 9 & 70\\
\hline
6 & 39 & 349 \\
\hline
7 & 189 & 2017\\
\hline  
8  & 1107 & 13358\\
\hline
9  & 7281 & 99377\\
\hline
10  & 54351 & 822041\\
\hline
\end{tabular}
\end{center}
If one looks in the OEIS, one will see that both sequences occur. 
That is, the sequence of $L_n^{ncm}(123)$ is sequence 
A080635 and counts the number of permutations on $n$ letters 
without double falls and without an initial fall. The sequence 
for $NCM_n(123)$ counts the number of permutations in $S_n$ which 
have no 123-matches as expected.

Next we will compute $NCM_{132}(t,x,y)$. In this case, we 
will directly compute 
\begin{equation}
L_{132}(t,y) = 
\sum_{m \geq 1} \frac{t^m}{m!} \sum_{C \in \mathcal{L}^{ncm}_m} y^{\cdes{C}}.
\end{equation}
We start with a general observation. 
Suppose $\tau = \tau_1 \ldots \tau_j \in S_j$ where $\tau_1 =1$. 
We can write any $n$-cycle $C$ in the form $C= (\alpha_1, \ldots, \alpha_n)$ 
where $\alpha_1 =1$.  It is easy to see that the only  
 cycle $\tau$-match in $C$ that can involve $\alpha_1 =1$ is 
$\alpha_1~\alpha_2 \ldots \alpha_j$.  This means that the only 
possible cycle $\tau$-matches in $C$ must be of the form 
$\alpha_i~\alpha_{i+1} \ldots \alpha_{i+j-1}$ where $i \leq n-j+1$. Thus 
the problem of finding $n$-cycles with no cycle $\tau$-matches 
is equivalent to the problem of finding permutations 
$\sg = \sg_1 \ldots \sg_n$ where $\sg_1 =1$ and $\sg$ has 
no $\tau$-matches. Let $S^1_n$ denote the set of all 
permutations $\sg = \sg_1 \ldots \sg_n \in S_n$ such that 
$\sg_1 =1$ and let $S^1_{n,\tau} = S^1_n \cap \mathcal{NMS}_n(\tau)$ 
be the set of permutations of $S^1_n$ with no $\tau$-matches. Then 
 \begin{equation}\label{AtoL1}
A_{n,\tau}(y) =   \sum_{\sg \in S^1_{n,\tau}} y^{1+\des{\sg}} =  \sum_{C \in \mathcal{L}^{ncm}_n} y^{\cdes{C}}.
\end{equation}
It turns out 
that in many cases we can find recurrences for $A_{n,\tau}(y)$ 
by classifying the permutations $\sg = \sg_1 \ldots \sg_n \in S_n$  
such that $\sg_1 =1$ according the position of 2 in $\sg$. Let $\mathcal{E}_{n,k,\tau}$ denote the set of permutations 
$\sg = \sg_1 \ldots \sg_n \in S^1_n(\tau)$ 
such that $\sg_k =2$. 

Now fix $\tau = 1~3~2$ and let 
$A_m(y) = A_{m,\tau}(y)$ and 
$\mathcal{E}_{n,k}= \mathcal{E}_{n,k,\tau}$. Our goal is 
compute $A(t,y) = \sum_{m \geq 1} \frac{A_{m}(y)t^m}{m!}$. 
Now $A_1(y) = A_2(y) =y$ since the permutation 
$1$ has no $\tau$-matches and $1+\des{1} =1$ and the permutation $1~2$ has no $\tau$-matches and  $1+\des{12} =1$. There 
are two permutations in $S_3$ that start with 1, namely, 
$1~2~3$ and $1~3~2$ and only  $1~2~3$ has no $\tau$-matches so that 
$A_3(y) = y$ since $1+\des{123} =1$.  
Now suppose that $n \geq 4$. 
Every permutation 
in $\mathcal{E}_{n,2}$ is of the form $1~2~\sg_3 \ldots \sg_n$. 
Clearly, the only $\tau$-matches must be of the form 
$\sg_i~\sg_{i+1}~\sg_{i+2}$ where $i \geq 2$ so that 
$\mathcal{E}_{n,2}$ contributes $A_{n-1}(y)$ to $A_n(y)$. 
Every permutation 
in $\mathcal{E}_{n,3}$ is of the form $1~\sg_2~2 \ldots \sg_n$ where 
$\sg_2 \geq 3$. Thus all such permutations have a $\tau$-match so 
that $\mathcal{E}_{n,3}$ contributes nothing to $A_n(y)$. 
For $4 \leq k \leq n$, 
the elements of the $\mathcal{E}_{n,k}$ are of the form 
$$1~\sg_2 \ldots \sg_{k-1}~2~\sg_{k+1} \ldots \sg_n.$$
In such a case, the only way that 2 can be part of $\tau$-match 
is if the $\tau$-match is $2~\sg_{k+1}~\sg_{k+2}$. It follows that an element 
of $\mathcal{E}_{n,k}$ contributes to $A_n(y)$ only if 
there is no $\tau$-match in $\sg_1 \ldots \sg_{k-1}$ and there 
is no $\tau$-match in $2~\sg_{k+1} \ldots \sg_{n}$. Note that 
since $\sg_{k-1} 2$ is adescent pair,  
$$1+\des{1~\sg_2 \ldots \sg_{k-1}~2~\sg_{k+1} \ldots \sg_n} = 
1+\des{1~\sg_2 \ldots \sg_{k-1}}+1+\des{2~\sg_{k+1} \ldots \sg_n}.
$$
Hence the contribution 
of $\mathcal{E}_{n,k}$ to $A_n(y)$ is just 
$\binom{n-2}{k-2}A_{k-1}(y)A_{n-k+1}(y)$ since there are $\binom{n-2}{k-2}$ 
to choose the elements which make up $\sg_2, \ldots, \sg_{k-1}$.  
Thus for $n \geq 4$, 
\begin{equation}\label{132rec1}
A_n(y) =  A_{n-1}(y) + \sum_{k=4}^n \binom{n-2}{k-2} A_{k-1}(y)A_{n-k+1}(y).
\end{equation}
Dividing  both sides of (\ref{132rec1}) by $(n-2)!$, we obtain that 
for all $n \geq 4$, 
\begin{equation}\label{132rec2}
\frac{A_n(y)}{(n-2)!} = \frac{A_{n-1}(y)}{(n-2)!}+
\sum_{k=2}^{n-2} \frac{A_{k+1}(y)}{k!} \frac{A_{n-k-1}(y)}{(n-2-k)!}.
\end{equation}  
If we multiply both sides of (\ref{132rec2}) by $t^{n-2}$ and 
sum, we obtain the differential equation 
\begin{equation*}
\frac{\partial^2A(t,y)}{\partial t^2}-y -yt = 
\frac{\partial A(t,y)}{\partial t} -y -yt +
\left(\frac{\partial A(t,y)}{\partial t}-y -yt\right)
\frac{\partial A(t,y)}{\partial t}
\end{equation*} 
so that $A(t,y)$ satisfies the second order partial differential equation 
\begin{equation}\label{132difeq}
\frac{\partial^2A(t,y)}{\partial t^2} = 
\frac{\partial A(t,y)}{\partial t}(1-y-yt) + 
\left(\frac{\partial A(t,y)}{\partial t}\right)
\end{equation} 
with initial conditions $A_0(y) = 0$ and $A_1(y) =y$. One can check 
that the solution to (\ref{132difeq}) is 
\begin{equation}
A(t,y) = ln\left(\frac{1}{1-y\int_0^t e^{(1-y)s-ys^2/2}ds}\right)
\end{equation}
Hence 
\begin{eqnarray}
L_{132}(t,y) &=& 
\sum_{m \geq 1} \frac{t^m}{m!} \sum_{C \in \mathcal{L}^{ncm}_m(132)} y^{\cdes{C}}.
\nonumber \\
&=& ln\left(\frac{1}{1-y\int_0^t e^{(1-y)s-ys^2/2}ds}\right)
\end{eqnarray}

Thus we have the following theorem.
\begin{theorem}\label{thm:132}
\begin{eqnarray}
NCM_{132}(t,x,y) &=& 
e^{x \ ln\left(\frac{1}{1-y\int_0^t e^{(1-y)s-ys^2/2}ds}\right)} \nonumber \\
&=& \frac{1}{\left(1-y\int_0^t e^{(1-y)s-ys^2/2}ds\right)^x}.
\end{eqnarray}
\end{theorem}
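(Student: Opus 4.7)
The plan is to apply Theorem \ref{Fundrefined} with $\Upsilon=\{132\}$, which gives
\[
NCM_{132}(t,x,y) = \exp\!\left(x L_{132}(t,y)\right), \qquad L_{132}(t,y) = \sum_{m\geq 1}\frac{t^m}{m!}\sum_{C\in\mathcal{L}_m^{ncm}(132)} y^{\cdes{C}},
\]
so the whole task reduces to computing $L_{132}(t,y)$ in closed form.

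First I would use the fact that $132$ begins with its minimum. Writing an $n$-cycle as $C=(1,\alpha_2,\ldots,\alpha_n)$, any wrap-around window of length three containing $\alpha_1 = 1$ would place the minimum of the window in position $2$ or $3$ and therefore cannot reduce to $132$; hence the only candidate cycle $132$-matches in $C$ are the consecutive windows $\alpha_i\alpha_{i+1}\alpha_{i+2}$ with $1\leq i\leq n-2$. This gives a bijection between $\mathcal{L}_m^{ncm}(132)$ and the set of permutations $\sigma = 1\,\sigma_2\ldots\sigma_n\in S_n$ with no consecutive $132$-match, and along this bijection $\cdes{C} = 1+\des{\sigma}$. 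Let $A_n(y)$ denote the sum of $y^{1+\des{\sigma}}$ over this set.

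Next I would derive a recurrence for $A_n(y)$ by conditioning on the position $k$ of the entry $2$. For $k=2$ the permutation begins $1\,2$ and the residual is an avoider of size $n-1$, contributing $A_{n-1}(y)$; $k=3$ is impossible since $1\,\sigma_2\,2$ with $\sigma_2\geq 3$ is itself a $132$-match; for $k\geq 4$ the prefix $1\sigma_2\ldots\sigma_{k-1}$ and the suffix $2\sigma_{k+1}\ldots\sigma_n$ must each avoid the pattern independently, and the forced descent $\sigma_{k-1}>2$ at the junction makes the descent statistic additive. Choosing the content of the prefix in $\binom{n-2}{k-2}$ ways gives
\[
A_n(y) = A_{n-1}(y) + \sum_{k=4}^n \binom{n-2}{k-2} A_{k-1}(y)\, A_{n-k+1}(y) \qquad(n\geq 4),
\]
with $A_1(y)=A_2(y)=A_3(y)=y$. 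Forming the EGF $A(t,y)=\sum_{m\geq 1}A_m(y) t^m/m!$ converts this convolution into the PDE
\[
\frac{\partial^2 A}{\partial t^2} = (1-y-yt)\,\frac{\partial A}{\partial t} + \left(\frac{\partial A}{\partial t}\right)^{\!2}
\]
with $A(0,y)=0$ and $A_t(0,y)=y$.

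The hardest step is solving this ODE, but the $A_t^{2}$ term signals a logarithm. Setting $A=-\ln H$ gives $A_t=-H'/H$ and $A_{tt}-A_t^2=-H''/H$, so the PDE collapses to the first-order linear equation $H''=(1-y-yt)H'$ in $t$. This integrates to $H'(t,y)=C(y)\,e^{(1-y)t-yt^2/2}$, and the initial data $H(0,y)=1$ and $H'(0,y)=-y$ force $C(y)=-y$, yielding
\[
H(t,y) = 1 - y\int_0^t e^{(1-y)s - ys^2/2}\,ds.
\]
Substituting $L_{132}(t,y)=A(t,y)=\ln(1/H(t,y))$ into $NCM_{132}(t,x,y)=e^{xA(t,y)}$ then gives the claimed formula.
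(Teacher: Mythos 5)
Your proposal is correct and follows essentially the same route as the paper: reduce to cycles written with $1$ first, derive the recurrence $A_n(y)=A_{n-1}(y)+\sum_{k=4}^n\binom{n-2}{k-2}A_{k-1}(y)A_{n-k+1}(y)$ by conditioning on the position of $2$, convert to the differential equation $A_{tt}=(1-y-yt)A_t+A_t^2$, and exponentiate via Theorem \ref{Fundrefined}. The only difference is cosmetic: where the paper merely asserts that one can check the stated solution of the differential equation, you actually derive it by the substitution $A=-\ln H$ (the same device the paper deploys later in Section 3), which is a welcome bit of extra detail.
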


We note that specialization 
$$NCM_{132}(t,1,1) = \frac{1}{1-\int_0^t e^{-s^2/2}ds}
$$ 
has been proved by Elizalde and Noy \cite{Eliz}.

One can use our generating functions for 
$NCM_{132}(t,x,y)$ to compute the initial values of 
 $L_n^{ncm}(132)$ and  $NCM_n(132)$.

\begin{center}
\begin{tabular}{|c|c|c|}
\hline 
$n$ & $L_n^{ncm}(132)$ & $NCM_n(132)$ \\
\hline
1& 1& 1\\
\hline
2 & 1 & 2\\
\hline
3 & 1 & 5 \\
\hline
4 & 2 & 16\\
\hline
5 & 7 & 63\\
\hline
6 & 28 & 296 \\
\hline
7 & 131 & 1623\\
\hline  
8  & 720 & 10176\\
\hline
9  & 4513 & 71793\\
\hline
10  & 31824 & 562848\\
\hline
\end{tabular}
\end{center}

If one looks in the OEIS, then both the sequences for 
$L_n^{ncm}(132)$ and $NCM_n(132)$ occur.  The sequence for 
$L_n^{ncm}(132)$ is sequence A052319 which 
counts the number of increasing rooted 
trimmed trees with $n$ nodes. Here an increasing tree 
is a tree labeled with $1, \ldots,n$ where the numbers 
increase as you move away from the root. A tree with 
a forbidden limb of length $k$ is a tree where the 
path from any leaf inward hits a branching node or another leaf within 
$k$ steps. A trimmed tree is a tree with a forbidden limb of length 2.  
The sequence 
for $NCM_n(132)$ is the number of permutations that have no 
132-matches as expected. 

We end this section with some results on $CA_{\Upsilon}(t,x,y)$ and 
$NCM_{\Upsilon}(t,x,y)$ where $\Upsilon \subseteq S_3$. 
For certain $\Upsilon$'s, this problem is uninteresting. 
For example, if $\Upsilon$ contains both 
$1~2~3$ and $1~3~2$, then any $k$-cycle 
$C =(\sg_1,\sg_2, \ldots, \sg_k)$ where $\sg_1 =1$ and $k \geq 3$ will have 
a cycle $\Upsilon$-match since $\sg_1~\sg_2~\sg_3$ must be either a 
cycle $1~2~3$-match or a cycle $1~3~2$-match. Thus in this 
case $\mathcal{L}_1^{ca}(\Upsilon) = \mathcal{L}_1^{ncm}(\Upsilon) = 
\{(1)\}$, $\mathcal{L}_2^{ca}(\Upsilon) = \mathcal{L}_2^{ncm}(\Upsilon) = \{(1,2)\}$,  and 
$\mathcal{L}_k^{ca}(\Upsilon) = \mathcal{L}_k^{ncm}(\Upsilon) = \emptyset$ 
for $k \geq 3$. 
 It then follows 
from Theorem \ref{Fundrefined} that 
\begin{equation*}
CA_{\Upsilon}(t,x,y) = NCM_{\Upsilon}(t,x,y) = e^{x\left(yt+\frac{yt^2}{2}\right)}
\end{equation*}

A more interesting case is when 
$\Upsilon =\{123,321\}$. First observe that since 
any cycle contains a cycle occurrence of $1~3~2$ if and only if 
it contains a cycle occurrence of $3~2~1$, then it is the case that  
any $k$-cycle $C$ where $k \geq 3$ must have  a cycle occurrence 
of either $1~2~3$ or $3~2~1$. Thus 
\begin{equation*}
CA_{\Upsilon}(t,x,y) = e^{x\left(yt+\frac{yt^2}{2}\right)}
\end{equation*}

Let 
$C =(\sg_1, \ldots, \sg_n)$ be an $n$-cycle such that 
$\sg_1 =1$.  If $n \geq 3$, then we must have 
$\sg_2 > \sg_3$ since otherwise there will be a 
cycle $1~2~3$-match.  But then we must have 
$\sg_3 < \sg_4$ since otherwise there would 
be cycle $3~2~1$-match. Continuing on in this way, we 
see that $\sg_2 \ldots \sg_n$ must be an alternating 
permutation. That is, we must have 
$$\sg_2 > \sg_3 < \sg_4 > \sg_5 < \sg_6 > \sg_7 \cdots.$$ 
However, this means if $n = 2k+1 \geq 3$, then there are no 
$n$ cycles which have no cycle $\Upsilon$-matches since 
since we are forced to have $\sg_{2k}> \sg_{2k+1} > \sg_1$ which 
is a cycle $3~2~1$-match.  If $n =2k$ and 
$\sg_2 \ldots \sg_n$ is alternating, then $C$ will have 
no cycle $\Upsilon$-matches. For such $\sg$ it is easy 
to see that $1+\des{\sg} = k$. Thus in this case, 
$L_{2k+1}^{ncm}(\Upsilon) = 0$ for $k \geq 1$ and 
$L_{2k}^{ncm}(\Upsilon)$ is just the number of odd alternating 
permuations of length $2k-1$ for $k \geq 1$. 

If we let $Alt_n$ denote 
the number of Alternating permutations of length $n$, then 
Andr\'{e} \cite{Andre1,Andre2} proved that 
\begin{equation}
\sum_{n\geq 0} Alt_{2n+1} \frac{t^{2n+1}}{(2n+1)!} = \frac{sin(t)}{cos(t)}.
\end{equation}
Thus 
\begin{eqnarray*}
\sum_{n\geq 1} L_{2n}^{ncm}(\Upsilon) \frac{t^{2n}}{(2n)!} 
&=&\sum_{n\geq 1} Alt_{2n-1} \frac{t^{2n}}{(2n)!}  \\
&=&\int_0^t \frac{sin(z)}{cos(z)}dz = -ln(|cos(t)|.
\end{eqnarray*}
Hence, 
\begin{eqnarray*}
\sum_{n\geq 1} \frac{t^{2n}}{(2n)!} 
\sum_{C \in \mathcal{L}_{2n}^{ncm}(\Upsilon)} y^{\cdes{C}} 
&=&\sum_{n\geq 1} y^n L_{2n}^{ncm}(\Upsilon) \frac{t^{2n}}{(2n)!} \\
&=& -ln(|cos(t\sqrt{y})|.
\end{eqnarray*}
and 
\begin{equation}
\sum_{n\geq 1} \frac{t^{n}}{(n)!} 
\sum_{C \in \mathcal{L}_{n}^{ncm}(\Upsilon)} y^{\cdes{C}} = 
ty-ln(|cos(t\sqrt{y})|.
\end{equation}
It follows that 
\begin{equation}
NCM_{\Upsilon}(t,x,y) = e^{x(ty-ln(|cos(t\sqrt{y})|)} = 
\frac{e^{xyt}}{cos(t\sqrt{y})^x} =e^{xyt} sec(t\sqrt{y})^x.
\end{equation}

Thus we have proved the following theorem.
\begin{theorem}
\begin{equation}
\sum_{n\geq 1} \frac{t^{n}}{(n)!} 
\sum_{C \in \mathcal{L}_{n}^{ncm}(\{123,321\}} y^{\cdes{C}} = 
ty-ln(|cos(t\sqrt{y})|
\end{equation}
and 
\begin{equation}
NCM_{\{123,321\}}(t,x,y) =e^{xyt} (sec(t\sqrt{y}))^x.
\end{equation}
\end{theorem}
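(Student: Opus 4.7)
The plan is to directly compute the cycle generating function $L_\Upsilon(t,y) = \sum_{m\ge 1} \frac{t^m}{m!} \sum_{C\in \mathcal{L}_m^{ncm}(\Upsilon)} y^{\cdes{C}}$, then exponentiate via Theorem \ref{Fundrefined}. First I would note that any $1$-cycle and the unique $2$-cycle $(1,2)$ contribute $yt$ and $y\frac{t^2}{2}$ respectively, so the entire content of the proof is to describe $L_m^{ncm}(\Upsilon)$ for $m\ge 3$ and weight these by $y^{\cdes{C}}$.

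For $m\ge 3$ and a cycle $C=(1,\sigma_2,\dots,\sigma_m)\in \mathcal{L}_m^{ncm}(\Upsilon)$, I would argue that the word $\sigma_2\sigma_3\cdots\sigma_m$ must be alternating (starting with a descent) exactly as in the preceding discussion of $\Upsilon$-matches: if any two consecutive comparisons $\sigma_i,\sigma_{i+1},\sigma_{i+2}$ go the same way, then we obtain either a cycle $123$-match or a cycle $321$-match. Moreover the wrap-around conditions $\sigma_m>1$ and $1<\sigma_2$ force the last comparison to be a descent $\sigma_{m-1}>\sigma_m$, because $\sigma_m>1$ together with $\sigma_m<\sigma_{m-1}$ read cyclically must avoid both a descending run of length three and an ascending run of length three. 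A parity check then shows $m$ must be even: writing $m=2k$, the sequence $\sigma_2>\sigma_3<\sigma_4>\cdots <\sigma_{2k-1}>\sigma_{2k}$ is an arbitrary alternating permutation of length $m-1=2k-1$ on the letters $\{2,\dots,m\}$, and for any such choice the cycle avoids $\Upsilon$. For odd $m\ge 3$ the wrap-around is incompatible and $L_m^{ncm}(\Upsilon)=0$.

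Counting descent pairs in such a cycle gives $\cdes{C}=k$: among the $m-1$ adjacent comparisons within $\sigma_2\cdots\sigma_m$ there are $k$ descents $\sigma_{2i}>\sigma_{2i+1}$, and the wrap-around contributes the descent $\sigma_m>1$, while $1<\sigma_2$ is a rise. Hence $\sum_{C\in \mathcal{L}_{2k}^{ncm}(\Upsilon)} y^{\cdes{C}} = y^k \cdot Alt_{2k-1}$ where $Alt_{n}$ is the number of alternating permutations of length $n$. Using Andr\'e's generating function $\sum_{n\ge 0} Alt_{2n+1}\frac{t^{2n+1}}{(2n+1)!} = \tan t$, I would integrate term by term to get
\begin{equation*}
\sum_{k\ge 1} y^k\, Alt_{2k-1} \frac{t^{2k}}{(2k)!} \;=\; \int_0^{t\sqrt{y}} \tan z\,\frac{dz}{\sqrt{y}}\cdot \sqrt{y} \;=\; -\ln|\cos(t\sqrt{y})|,
\end{equation*}
after substituting $z=s\sqrt{y}$ to match the two $t$'s introduced in $t^{2k}y^k = (t\sqrt y)^{2k}$. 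Adding back the $1$-cycle contribution $yt$ (the $2$-cycle contribution is absorbed into the expansion of $-\ln|\cos(t\sqrt{y})|$ since $Alt_1=1$ gives $y\frac{t^2}{2}$) yields $L_\Upsilon(t,y)=ty-\ln|\cos(t\sqrt{y})|$, which is the first identity.

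The second identity is then immediate from Theorem \ref{Fundrefined}:
\begin{equation*}
NCM_{\Upsilon}(t,x,y) = e^{x L_\Upsilon(t,y)} = e^{xyt}\bigl(\sec(t\sqrt{y})\bigr)^x.
\end{equation*}
The main obstacle I anticipate is the bookkeeping of the boundary cases: verifying that $m=1,2$ are handled correctly (so that the alternating-permutation count with $k=1$ reproduces the $2$-cycle contribution and the lone fixed point produces the extra $yt$), and confirming that the wrap-around descent count really gives $\cdes{C}=k$ rather than $k-1$ or $k+1$, since a miscount here propagates to a wrong exponent of $y$ in the final formula.
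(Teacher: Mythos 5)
Your overall strategy is exactly the paper's: characterize the cycles in $\mathcal{L}_m^{ncm}(\{123,321\})$ as those of even length $2k$ whose word $\sigma_2\cdots\sigma_{2k}$ (after the initial $1$) is alternating, weight them by $y^k$ using Andr\'e's numbers, integrate $\tan$ to get $-\ln|\cos(t\sqrt{y})|$, add the fixed-point term $ty$, and exponentiate via Theorem \ref{Fundrefined}. The final formulas are correct, but two steps in the middle are stated backwards and, read literally, would describe an empty (or wrongly weighted) set of cycles.

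First, the wrap-around triple $(\sigma_{m-1},\sigma_m,1)$ is a cycle $321$-match precisely when $\sigma_{m-1}>\sigma_m>1$, i.e.\ when the last comparison is a \emph{descent}; so the wrap-around forces the last comparison to be an \emph{ascent} $\sigma_{m-1}<\sigma_m$, not a descent as you assert. Since the pattern begins with the forced descent $\sigma_2>\sigma_3$, the comparison $(\sigma_j,\sigma_{j+1})$ is a descent exactly when $j$ is even, so the final comparison $(\sigma_{m-1},\sigma_m)$ is an ascent exactly when $m$ is even --- that is the correct parity argument, and it is why odd cycles die: for $m=2k+1$ one is forced into $\sigma_{2k}>\sigma_{2k+1}>1$, which is the paper's phrasing. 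Your displayed pattern ending $\cdots<\sigma_{2k-1}>\sigma_{2k}$ carries the same error (and is inconsistent with its own opening $\sigma_2>\sigma_3<\sigma_4$ already at $k=2$). Second, your descent count double-counts: within $\sigma_2\cdots\sigma_{2k}$ there are only $k-1$ descents, at $(\sigma_{2i},\sigma_{2i+1})$ for $i=1,\dots,k-1$, and the wrap-around $\sigma_{2k}>1$ supplies the $k$-th, giving $\cdes{C}=k$; as written, your tally ($k$ internal descents plus a further wrap-around descent) gives $k+1$, which would attach $y^{k+1}$ to $Alt_{2k-1}$ and would not produce $-\ln|\cos(t\sqrt{y})|$. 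With these two directions corrected, your argument is the paper's proof essentially verbatim.
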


\section{General results}

In this section, we shall describe how we can compute 
$NCM_{\tau}(t,x,y)$ for certain general classes of permutations  
$\tau$. We start by considering permutations 
 $\tau = \tau_1 \ldots \tau_j$ where 
$\tau_1 =1$ and $\tau_j =2$. In that case, we have the following 
theorem. 

\begin{theorem}\label{thm:1-2} 
Let $\tau = \tau_1 \ldots \tau_j \in S_j$ where 
$j \geq 3$ and $\tau_1 =1$ and $\tau_j =2$.
Then 
\begin{equation}\label{1alpha2}
NCM_{\tau}(t,x,y) = \frac{1}{(1 - \int_0^t e^{(y-1)s- \frac{y^{\des{\tau}}s^{j-1}}{(j-1)!}}ds)^x}
\end{equation}
\end{theorem}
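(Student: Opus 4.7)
The plan is to extend to general $\tau = 1\tau_2\ldots\tau_{j-1}2$ the argument used for $\tau = 132$ in Section~2. Since $\tau$ starts with $1$, Corollary~\ref{cor:starts1} combined with equation \eqref{expncmUtxy} of Theorem~\ref{Fundrefined} reduces the problem to computing
\[
A(t,y) = \sum_{m \geq 1}\frac{t^m}{m!}A_m(y), \qquad A_m(y) = \sum_{\sg \in S^1_{m,\tau}} y^{1+\des{\sg}},
\]
because $NCM_\tau(t,x,y) = \exp(xA(t,y))$. I would then classify each $\sg = 1\sg_2\ldots\sg_n \in S^1_{n,\tau}$ by the position $k$ of the entry $2$, writing $\sg$ as the overlap at $\sg_k = 2$ of its prefix $P = \sg_1\ldots\sg_k$ and its suffix $S = \sg_k\ldots\sg_n$.

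The first step is a short case analysis on $\tau$-match windows $[i,i+j-1]$, using $\tau_1 = 1$ (any match covering position $1$ must start there) and $\tau_j = 2$ (any window disjoint from $\sg_1 = 1$ whose minimum is the globally second-smallest entry must have that entry at its left end, forcing the match to start at position $k$). The conclusion is that no $\tau$-match of $\sg$ straddles position $k$ except when $i=1$ and $k=j$, in which case the match occupies $[1,j]$ and lies inside $P$. Combined with $\des{\sg} = \des{P} + \des{S}$, this yields the convolution recurrence
\[
A_n(y) = \sum_{k=2}^{n}\binom{n-2}{k-2}\tilde P_k(y)\,A_{n-k+1}(y),
\]
where $\tilde P_k(y) = \sum_P y^{\des{P}}$ runs over prefixes of length $k$ starting with $1$, ending with $2$, and avoiding $\tau$-matches. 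Setting $P(t,y) = \sum_{k \geq 2}\tilde P_k(y) t^k/k!$ and mimicking the passage to \eqref{132difeq}, this becomes the ODE $A''(t,y) = P''(t,y)A'(t,y)$, which together with $A'(0,y) = y$ and $P'(0,y) = 0$ integrates to $A'(t,y) = y\exp(P'(t,y))$.

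The crucial and most delicate step is the identification of $P'(t,y)$ in closed form by means of the ``delete the terminal $2$'' map $P \mapsto P^*$, which I claim is a weight-preserving bijection from admissible prefixes of length $k$ onto $S^1_{k-1,\tau}$ whenever $k \neq 2,j$. Indeed, by the case analysis above no $\tau$-match of $P$ terminates at the last position unless $k = j$ and $P = \tau$, so the map preserves $\tau$-avoidance outside that single exception; and because the removed boundary step $\sg_{k-1} \geq 3 > 2$ is a descent precisely when $k \geq 3$, one has $y^{\des{P}} = y \cdot y^{\des{P^*}}$, exactly matching the factor in $A_{k-1}(y) = y\sum_{P^*} y^{\des{P^*}}$. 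The two edge cases furnish the only corrections: at $k = 2$ the boundary $\sg_1 = 1 < 2$ is not a descent, giving $\tilde P_2(y) = 1 = A_1(y) - (y-1)$; at $k = j$ one must subtract the single prefix $P = \tau$, contributing $-y^{\des{\tau}}$. Summing $\tilde P_k(y) t^{k-1}/(k-1)!$ over $k \geq 2$ then yields
\[
P'(t,y) = A(t,y) + (1-y)t - \frac{y^{\des{\tau}}\,t^{j-1}}{(j-1)!}.
\]
Substituting into $A'(t,y) = y\exp(P'(t,y))$ produces the separable equation $\frac{d}{dt}\bigl(-e^{-A(t,y)}\bigr) = y\exp\bigl((1-y)t - y^{\des{\tau}}t^{j-1}/(j-1)!\bigr)$, which integrates from $0$ to $t$ with $A(0,y) = 0$ and exponentiates to the claimed formula for $NCM_\tau(t,x,y)$. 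I expect the bijection argument together with the bookkeeping of the two boundary corrections at $k = 2$ and $k = j$ to be the main technical obstacle; everything upstream is a direct extension of the $\tau = 132$ derivation, and the remaining computation is a one-line integration.
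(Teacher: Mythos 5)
Your proof is correct and is essentially the paper's own argument: both classify $\sg \in S^1_{n,\tau}$ by the position $k$ of the entry $2$, observe that no $\tau$-match can straddle that position except the window $[1,j]$ when $k=j$, identify the prefix count with $A_{k-1}(y)$ by deleting the terminal $2$ (with exactly the corrections at $k=2$ and $k=j$ that you describe), and integrate the resulting second-order ODE. Your packaging of the three cases into the single convolution $A''=P''A'$ via the prefix series $P(t,y)$ is a slightly cleaner route to the differential equation than the paper's case-split recursion and its truncation bookkeeping, but the underlying combinatorics is identical. One point worth flagging: your derivation yields $NCM_\tau(t,x,y)=\bigl(1-y\int_0^t e^{(1-y)s-y^{\des{\tau}}s^{j-1}/(j-1)!}\,ds\bigr)^{-x}$, which agrees with the $j=3$ specialization in Theorem \ref{thm:132} but not with the display \eqref{1alpha2} as printed (which omits the factor $y$ in front of the integral and has $(y-1)$ rather than $(1-y)$ in the exponent); the printed statement appears to contain typographical errors, and your version is the one consistent with the initial condition $A_1(y)=y$.
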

\begin{proof}
Note that in the special case where $j=3$, the only permutation satisfying 
the hypothesis of the theorem is 
$\tau = 1~3~2$. Thus in this special case, the result follows 
from Theorem \ref{thm:132}. Thus assume that we fix a 
$\tau = \tau_1 \ldots \tau_j \in S_j$ where 
$\tau_1 =1$ and $\tau_j =2$ and $j \geq 4$.

Our first goal is to compute 
\begin{equation}
A(t,y) = \sum_{n \geq  1} A_n(y) \frac{t^n}{n!}
\end{equation}
where $A_n(y) = \sum_{\sg \in S^1_{n,\tau}} y^{\des{\sg}+1}$. 
Now it is easy to see that 
$A_n(y) = \sum_{\sg \in S^1_n} y^{\des{\sg}+1}$ for $1 \leq n \leq j-1$. 
Thus 
\begin{eqnarray*}
A(t,y) &=& y t + y \frac{t^2}{2} + (y+y^2) \frac{t^3}{3!} + \cdots \\
\frac{\partial A(t,y)}{\partial t}  &=& y  + y t + (y+y^2) \frac{t^2}{2!} + \cdots \ \mbox{and}\\
\frac{\partial^2 A(t,y)}{\partial t^2}  &=& y + (y+y^2)t + \cdots .
\end{eqnarray*}
For $n \geq j$, we shall prove a recursive formula for 
$A_n(y)$. We 
consider three cases for $\sg = \sg_1 \ldots \sg_n \in S^1_{n,\tau}$ depending 
on the position of 2 in $\sg$. \\
\ \\
{\bf Case 1.} $\sg_2 =2$.\\
In this case because $j \geq 4$, the only possible $\tau$-matches 
in $\sg$ must occur in $\sg_2 \ldots \sg_n$.  Since 
$\des{\sg}+1 = \des{\sg_2 \ldots \sg_n}+1$, it follows 
that the contribution of the permutations in this case to 
$A_n(y)$ is just $A_{n-1}(y)$. \\
\ \\
{\bf Case 2.}  $\sg_k =2$ where $k \notin \{2,j\}$.\\
In this case, we have 
$\binom{n-2}{k-2}$ ways to choose the elements $D_k$ that will constitute 
$\sg_2 \ldots \sg_{k-1}$. Once we have chosen $D_k$, we have to 
consider the ways in which we can arange the elements of 
$D_k$ to form  
$\sg_2 \ldots \sg_{k_1}$ and the ways that we  
can arrange $[n]-(D_k \cup \{1,2\})$ to form $\sg_{k+1} \ldots \sg_{n}$ 
so that 
\begin{equation}\label{2=k}
\sg = 1~\sg_2 \ldots \sg_{k-1}~2~\sg_{k+1} \ldots \sg_{n}
\end{equation}
has no $\tau$-matches.  However  it is easy to see 
that since $k \notin \{2,j\}$ that the only $\tau$-matches 
for $\sg$ of the form (\ref{2=k}) can occur in either 
entirely in $1~\sg_2 \ldots \sg_{k-1}$ or entirely in 
$2~\sg_{k+1} \ldots \sg_n$.  Moreover it is the case 
that 
$$\des{\sg} +1 = \des{1~\sg_2 \ldots \sg_{k-1}}+1 + \des{2~\sg_{k+1} \ldots \sg_n}+1$$
since $\sg_{k-1} > 2$.  Thus the contribution to 
$A_n(y)$ of the permutations in this case is 
$$\binom{n-2}{k-2}A_{k-1}(y) A_{n-k+1}(y).$$ \\
\ \\
{\bf Case 3.}  $\sg_j =2$.\\
In this case, we have 
$\binom{n-2}{j-2}$ ways to choose the elements $D_j$ that will constitute 
$\sg_2 \ldots \sg_{j-1}$. Once we have chosen $D_j$, we have to 
consider the ways in which we can arange the elements of 
$D_j$ to form  
$\sg_2 \ldots \sg_{j_1}$  and we  
can arrange $[n]-(D_j \cup \{1,2\})$ to form $\sg_{j+1} \ldots \sg_{n}\sg_{k+1} \ldots \sg_{n}$ 
so that 
\begin{equation}\label{2=j}
\sg = 1~\sg_2 \ldots \sg_{j-1}~2~\sg_{j+1} \ldots \sg_{n}
\end{equation}
has no $\tau$-matches.  Unlike Case 2, it is not 
enough just to ensure that $ 1~\sg_2 \ldots \sg_{j-1}$ and 
$2~\sg_{j+1} \ldots \sg_{n}$ have no $\tau$-matches. That is, 
we must also ensure that $\red{\sg_2 \ldots \sg_{j-1}} \neq \red{\tau_2 \ldots 
\tau_{j-1}}$ since otherwise $1~\sg_2 \ldots \sg_{j-1}~2$ would be 
$\tau$-match.  Note that in such a situation 
$\des{1~\sg_2 \ldots \sg_{j-1}}+1 = \des{\tau}$.  Thus the contributions 
to $A_n(y)$ 
of the permutations in this case is 
$$\binom{n-2}{j-2} (A_{j-1}(y) - y^{\des{\tau}})A_{n-j+1}(y).$$

It follows that for $ n \geq j$, 
\begin{equation}
 A_n(y) =  A_{n-1}(y) + \sum_{k=3}^n \binom{n-2}{k-2} A_{k-1}(y) A_{n-k+1}(y)  - \binom{n-2}{j-2} y^{\des{\tau}} A_{n-j+1}(y) 
\end{equation} 
or, equivalently,  
\begin{equation}\label{1alpha2rec}
 \frac{A_n(y)}{(n-2)!}  = \frac{A_{n-1}(y)}{(n-2)!} + 
\left( \sum_{k=3}^n \frac{A_{k-1}(y)}{(k-2)!} 
\frac{A_{n-k+1}(y)}{(n-k)!} \right) 
- \frac{y^{\des{\tau}}}{(j-2)!} \frac{A_{n-j+1}(y)}{(n-j)!}. 
\end{equation}
Now for any formal power series 
$f(t) = \sum_{n\geq 1} f_nt^n$, we let 
$f(t)|_{t^{\leq j}}$ denote $f_0 +f_1t + \cdots + f_jt^j$. We then 
then multiple both sides of (\ref{1alpha2rec}) by $t^{n-2}$ and sum 
and we will get the differential equation 
\begin{eqnarray*} \label{1alpha2difeq1} 
&&\frac{\partial^2 A(t,y)}{\partial t^2} - \left(\frac{\partial^2 A(t,y)}{\partial t^2}|_{t^{\leq j-3}}\right) \\
&&= \frac{\partial A(t,y)}{\partial t} - \left(\frac{\partial A(t,y)}{\partial t}|_{t^{\leq j-3}}\right) + \nonumber \\
&& \ \ \ \left(\frac{\partial A(t,y)}{\partial t}-y\right)\frac{\partial A(t,y)}{\partial t} - \left(\left(\frac{\partial A(t,y)}{\partial t}-y\right)\frac{\partial A(t,y)}{\partial t}|_{t^{\leq j-3}}\right) - \nonumber \\
&&\ \ \ \frac{y^{\des{\tau}}}{(j-2)!} \frac{\partial A(t,y)}{\partial t}.
\end{eqnarray*}
Thus 
\begin{eqnarray*}
\frac{\partial^2 A(t,y)}{\partial t^2} &=& (1-y- y^{\des{\tau}})
\frac{\partial A(t,y)}{\partial t} + \left(\frac{\partial A(t,y)}{\partial t}\right)^2 +\\
&& \left(\frac{\partial^2 A(t,y)}{\partial t^2}|_{t^{\leq j-3}}\right)
- \left(\frac{\partial A(t,y)}{\partial t}|_{t^{\leq j-3}}\right) - 
\left(\left(\frac{\partial A(t,y)}{\partial t}-y\right)\frac{\partial A(t,y)}{\partial t}|_{t^{\leq j-3}}\right).
\end{eqnarray*}
We claim that 
\begin{equation*} 
0=\left(\frac{\partial^2 A(t,y)}{\partial t^2}|_{t^{\leq j-3}}\right)
- \left(\frac{\partial A(t,y)}{\partial t}|_{t^{\leq j-3}}\right) - 
\left(\left(\frac{\partial A(t,y)}{\partial t}-y\right)\frac{\partial A(t,y)}{\partial t}|_{t^{\leq j-3}}\right)
\end{equation*}
or, equivalently, that 
\begin{equation}\label{special1}
\frac{\partial^2 A(t,y)}{\partial t^2}|_{t^{\leq j-3}}
= \left( \frac{\partial A(t,y)}{\partial t} + 
\left(\frac{\partial A(t,y)}{\partial t}-y\right)\frac{\partial A(t,y)}{\partial t}\right)|_{t^{\leq j-3}}.
\end{equation}
If we take the coefficient of $t^s$ where $0 \leq s \leq t^{j-3}$ 
on both sides 
of (\ref{special1}), then we must show that 
\begin{eqnarray*}
\frac{A_{s+2}(y)}{s!} &=&  \frac{A_{s+1}(y)}{s!} + \sum_{k=1}^s 
\frac{A_{k+1}(y)}{k!}\frac{A_{s-k+1}(y)}{(s-k)!} \\
&=&  \frac{A_{s+1}(y)}{s!} + \sum_{k=3}^{s+2} 
\frac{A_{k-1}(y)}{(k-2)!}\frac{A_{s+2-(k-1)}(y)}{(s+2-k)!}.
\end{eqnarray*}
Thus if we multiply both sides by $s!$, we see that we must 
show that for $0 \leq s \leq j-3$, 
\begin{equation}\label{special2}
A_{s+2}(y) = A_{s+1}(y) + \sum_{k=3}^{s+2} \binom{s+2}{k-2} 
A_{k-1}(y) A_{s+2 -(k-1)}(y).
\end{equation}
However this follows from our analysis of Cases 1, 2, and 3 above for 
the recursion of $A_{s+2}(y)$. 
That is, since $s+2 \leq j-1$, Case 2 does not apply so 
that we only get the contributions from Cases 1 and 3 which 
is exactly (\ref{special2}). 

Thus we have shown that $A(y,t)$ satisfies the partial differential 
equation where 
\begin{equation}
\frac{\partial^2 A(t,y)}{\partial t^2} = (1-y- y^{\des{\tau}})
\frac{\partial A(t,y)}{\partial t} + \left(\frac{\partial A(t,y)}{\partial t}\right)^2
\end{equation}
 with intial conditions that $A(y,0) =0$, 
$A(y,t)|_t = y$, and $A(y,t)|_{\frac{t^2}{2!}} = y$.  
It is then easy to check that the solution to this PDE is 
\begin{equation} 
A(y,t) = ln\left(\frac{1}{1 - \int_0^t e^{(1-y)s + y^{\des{\tau}}\frac{s^{j-1}}{(j-1)!}} ds}\right).
\end{equation}
Thus   
\begin{eqnarray}\label{special3}
A(y,t) &=& \sum_{n\geq 1} \frac{t^n}{n!} \sum_{C \in \mathcal{L}^{ncm}_n(\tau)} y^{\cdes{C}} \nonumber \\
&=& 
ln\left(\frac{1}{1 - \int_0^t e^{(1-y)s + y^{\des{\tau}}\frac{s^{j-1}}{(j-1)!}} ds}\right).
\end{eqnarray}
But then we know by Theorem \ref{Fundrefined}, that 
\begin{eqnarray*}
NCM_{\tau}(t,x,y) &=& e^{x \sum_{n \geq 1} \frac{t^n}{n!}\sum_{C \in \mathcal{L}^{ncm}_n(\tau)} y^{\cdes{C}}} \\
&=& e^{x  ln \left(\frac{1}{1 - \int_0^t e^{(1-y)s + y^{\des{\tau}}\frac{s^{j-1}}{(j-1)!}} ds}\right)}\\
&=&\left(\frac{1}{1 - \int_0^t e^{(1-y)s + y^{\des{\tau}}\frac{s^{j-1}}{(j-1)!}} ds}\right)^x
\end{eqnarray*}
which is what we wanted to prove. 
\end{proof}

We end this section by showing how one can compute $NCM_\tau(t,x,y)$ where 
 $\tau \in S_m$ is of the form $\tau=1~2~\dots~(j-1)~\gamma~j$ where 
$\gamma$ is a permutation of the elements $j+1, \ldots, m$ where 
$m \geq j+1$.  We let $p = m-j$ so that $\red{\gamma} \in S_p$.  
We shall assume that $j \geq 3$ since we have already dealt with permutations that start with 1 
and end with 2. 

Using our previous theorems as a guide, we shall assume that 
$NCM_\tau(t,x,y)$ is of the form 
\begin{equation}\label{final1}
NCM_\tau(t,x,y) = e^{x\sum_{n \geq 1} \frac{t^n}{n!} \sum_{C \in \mathcal{L}^{ncm}_n(\tau)} y^{\cdes{C}}}\nonumber = \frac{1}{(U_\tau(t,y))^x}
\end{equation}
where 
\begin{equation}\label{Udef1}
U_\tau(t,y) = \sum_{n \geq 0} U_{n,\tau} \frac{t^n}{n!}.
\end{equation}
We have been unable to find a closed form for 
$U_\tau(t,y)$. However, we can show that the coefficients of 
$U_{n,\tau}(y)$ satisfy a simple recursion. 
That is, we shall prove the following.

\begin{theorem}\label{thm:1-j-1-sg-j} Suppose that $\tau = 1~2~\ldots j-1~\gamma~j$ where 
$\gamma$ is a permutation of $j+1, \ldots , j+p$ and $j \geq 3$. 
Then 
$$NCM_\tau(t,x,y) = \frac{1}{(U_\tau(t,y))^x}$$ 
where 
\begin{equation}
U_\tau(t,y) = \sum_{n \geq 0} U_{n,\tau}(y) \frac{t^n}{n!}
\end{equation}
and 
\begin{equation}\label{Urec}
U_{n+j,\tau}(y) = (1-y)U_{n+j-1,\tau}(y) - y^{\des{\tau}}\binom{n}{p} 
U_{n-p+1,\tau}(y).
\end{equation}
\end{theorem}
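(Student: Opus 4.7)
The plan is to establish the recursion \eqref{Urec} by translating it into a linear PDE for $U_\tau$ and then verifying it through an exponential-formula argument. Since $\tau$ starts with~$1$, Corollary~\ref{cor:starts1} gives $NCM_\tau(t,x,y)=(NM_\tau(t,1,y))^x$, so setting $V(t,y)=NM_\tau(t,1,y)$ the theorem reduces to showing $V(t,y)\,U_\tau(t,y)=1$, i.e., that $U_\tau:=1/V$ satisfies \eqref{Urec}. Multiplying \eqref{Urec} by $t^n/n!$ and summing $n\ge 0$ converts it into the linear PDE
\[
\frac{\partial^{j} U_\tau}{\partial t^{j}}
=(1-y)\,\frac{\partial^{j-1} U_\tau}{\partial t^{j-1}}
-\frac{y^{\des{\tau}}\,t^{p}}{p!}\,\frac{\partial U_\tau}{\partial t},
\]
with initial data $U_{n,\tau}(y)$ for $n<j$ given by the low-order coefficients of $1/V$.

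By Lemma~\ref{key1} and Theorem~\ref{Fundrefined}, $V=\exp A$ where $A(t,y)=\sum_{n\ge 1}A_n(y)\,t^n/n!$ with $A_n(y)=\sum_{\sg\in S^1_{n,\tau}}y^{1+\des{\sg}}$. I would follow the template of Theorem~\ref{thm:1-2}: classify $\sg\in S^1_{n,\tau}$ by the position $k$ of the element~$2$ to derive a recurrence for $A_n(y)$. The argument rests on two structural points. First, since $\sg_k=2$ is the minimum of $\sg_2,\ldots,\sg_n$, any $\tau$-match of $\sg$ containing $\sg_k$ must either start at position~$1$ (with $\sg_k$ playing the role of $\tau_2=2$, which forces $k=2$) or start at position~$k$ (with $\sg_k$ playing $\tau_1=1$). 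Second, a direct alignment check shows $\tau=1\,2\cdots(j-1)\,\gamma\,j$ has only the trivial self-overlap at shift $m-1$: for any $1\le s\le m-2$, comparing the rank of $\tau_{s+1}$ in $\tau_{s+1}\ldots\tau_m$ against $\tau_1=1$ gives an immediate contradiction. Together these facts imply that the case $k=2$ contributes $A_{n-1}(y)$ corrected by an inclusion-exclusion term excluding the single forbidden $\tau$-match at position~$1$, while the cases $k\ge 3$ contribute convolutions $\binom{n-2}{k-2}A_{k-1}(y)A_{n-k+1}(y)$; the correction term is the one that carries the $y^{\des{\tau}}$ factor.

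Translating the recurrence for $A_n(y)$ into a PDE for $A(t,y)$ and then using $U_\tau=e^{-A}$ together with the identities $U_\tau'=-A'U_\tau$, $U_\tau''=(-A''+(A')^2)U_\tau$, and so on (given by the Bell-polynomial expansion of $e^{-A}$'s derivatives), the quadratic convolutional contributions from $A$'s recurrence should cancel against the $(A')^k$ terms in the derivative expansions, leaving only the linear PDE above. Extracting coefficients of $t^n/n!$ then yields \eqref{Urec}. The main obstacle is the algebraic bookkeeping needed to verify this cancellation, which in effect amounts to checking that the Bell-polynomial combinations appearing in each $U_\tau^{(\ell)}$ line up precisely with the boundary terms of the generating function for the recurrence of $A_n(y)$, and that everything collapses to the $t^p/p!$ coefficient in the linear correction. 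That this works depends crucially on the structural rigidity of $\tau$, which confines the inclusion-exclusion correction to a single ``maximal overlap'' term.
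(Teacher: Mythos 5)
Your overall frame is the right one---reduce to $U_\tau=1/NM_\tau(t,1,y)=e^{-A}$ via Corollary \ref{cor:starts1}, classify $\sg\in S^1_{n,\tau}$ by the position of $2$, and convert the resulting recurrence into the linear equation
\[
U^{(j)}=(1-y)U^{(j-1)}-\frac{y^{\des{\tau}}t^{p}}{p!}\,U',
\]
which is equivalent to (\ref{Urec}). But the central combinatorial step fails as you have set it up. When $2$ occupies position $2$, so $\sg=1~2~\sg_3\ldots\sg_n$, forbidding a $\tau$-match starting at position $1$ is equivalent to requiring that $2~\sg_3\ldots\sg_n$ not \emph{begin} with a $\tau^{(2)}$-match, where $\tau^{(2)}=\red{2~3\cdots(j-1)~\gamma~j}$. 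This is not ``$A_{n-1}(y)$ minus a single inclusion--exclusion term'': $\tau^{(2)}$ is again of the form $1~2\cdots(j-2)~\gamma'~(j-1)$, so the condition unwinds recursively through the prefixes $\tau^{(2)},\tau^{(3)},\ldots,\tau^{(j-1)}$. The paper's proof introduces a chain of auxiliary series $B_i(t,y)$, $2\le i\le j-1$, where $B_{i,n}(y)$ sums $y^{1+\des{\sg}}$ over $\sg\in S^1_n$ with no $\tau$-match and not beginning with a $\tau^{(2)}$-, \ldots, or $\tau^{(i)}$-match; each level satisfies a recursion whose ``position $2$'' case shifts to $B_{i+1,n-1}$, and only at the bottom level ($B_{j-1}$, with $2$ in position $p+2$) does the forbidden configuration collapse to the single pattern $\red{\sg_2\cdots\sg_{p+1}}=\red{\gamma}$, producing the one correction term $\binom{n-2}{p}y^{\des{\tau}}A_{n-p-1}(y)$. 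Your proposal places that correction directly into the recurrence for $A_n$, where it does not belong and has no such closed form.

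This is not mere bookkeeping. A single second-order recurrence for $A_n$ can only produce a second-order differential equation for $U=e^{-A}$, whereas (\ref{Urec}) is an order-$j$ relation; the extra $j-2$ derivatives arise exactly from composing the $j-2$ shift-by-one recursions $A_n\to B_{2,n-1}\to\cdots\to B_{j-1,n-j+2}$ (the paper's inner lemma $B_i'=-(U^{(i)}+y\sum_{k=1}^{i-1}U^{(k)})/U$ is the analytic shadow of this chain). So no Bell-polynomial cancellation can recover the order-$j$ equation from the recurrence you propose. A secondary point: your stated reason for the absence of nontrivial self-overlaps is also not right---for a shift $s\le j-2$ the entry $\tau_{s+1}=s+1$ \emph{is} the minimum of the suffix $\tau_{s+1}\cdots\tau_m$, so the rank comparison you describe yields no contradiction; the overlaps are instead ruled out by looking at the ends (the reduced suffix terminates in $j-s$ while the reduced prefix terminates in a value at least $j$, or else the prefix is increasing while the suffix contains the descent onto $\tau_m=j$). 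The conclusion is true, but it does not by itself repair the missing chain of auxiliary recursions.
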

\begin{proof}

Taking the natural logarithm of both sides (\ref{final1}) 
and using (\ref{AtoL1}), we see  
\begin{equation}\label{Udef2}
-ln(U_\tau(t,y))=  \sum_{n \geq 1} \frac{t^n}{n!} \sum_{\sg \in \mathcal{L}^{ncm}_n(\tau)} y^{\des{\sg}+1} 
 \sum_{n \geq 1} \sum_{\sg \in S^1_{n,\tau}} y^{\des{\sg}+1}.
\end{equation}

Before proceeding, we need to establish some notation. Fix $\tau$ of 
the form $1~2~\ldots j-1 \gamma j$ where $j \geq 3$.   
For any $\sg \in S_n^1$, we let $\tau$-$\mbox{imch}(\sg)$ be the 
indicator function that the initial segment of size $m$ in 
$\sg$ is a $\tau$-match. Thus 
$\tau$-$\mbox{imch}(\sg) = 1$ if $\red{\sg_1 \ldots \sg_m} =\tau$ and 
we let $\tau$-$\mbox{imch}(\sg) = 0$ otherwise. For $i=1, \ldots, j-1$, 
we let $\tau^{(i)} = \red{i~i+1~ \ldots j-1~\gamma~j}$. 
Our first goal is to compute  
\begin{equation}
A(t,y)= \sum_{n \ge 1}A_n(y)\frac{t^n}{n!}
\end{equation}
where 
$$A_n(y) = \sum_{\sigma \in S^1_{n,\tau}}y^{1+\text{des}(\sigma)}.$$
For $i =2, \ldots, k-1$, we shall also need the following functions 
\begin{equation}
B_i(t,y)= 1+\sum_{n \ge 1}B_{i,n}(y)\frac{t^n}{n!} 
\end{equation}
where 
$$B_{i,n}(y) = \sum_{\substack{\sigma \in S^1_n \\ \tau\text{-mch}(\sigma)=0 \\  \tau^{(2)} \text{imch}(\sigma)=0 \\ \tau^{(3)} \text{imch}(\sigma)=0\\ \vdots \\ \tau^{(i)} \text{imch}(\sigma)=0}}y^{1+\text{des}(\sigma)}.$$
Thus $B_{i,n}(y)$ is the sum of $y^{1+\text(des)(\sg)}$ over 
all permutation $\sg$ in $S_n^1$ such that $\sg$ has no $\tau$-matches 
and $\sg$ does not start with a $\tau^{(j)}$-match for $j =2, \ldots, i$.

First we develop recursions for $A_n(y)$ for $n \geq 2$. Let 
$\mathcal{E}_{n,k,\tau}$ denote the set of all 
$\sg = \sg_1 \ldots \sg_n \in S^1_{n,\tau}$ such 
that $\sg_k =2$. We then consider 
two cases for $\sg \in S^1_{n,\tau}$ depending on 
which $\mathcal{E}_{n,k,\tau}$ contains   $\sg$.\\
\ \\
{\bf Case 1.} $\sg \in \mathcal{E}_{n,2,\tau}$.\\
 Thus $\sg = 1~2~\sg_3 \ldots \sg_n$.  To ensure 
that $\sg$ has no $\tau$-matches, we must ensure that 
there are no $\tau$ matches in $2~\sg_3 \ldots \sg_n$ and 
that $\sg$ does not start with a $\tau$-match which is 
equivalent to ensuring that $2~\sg_3 \ldots \sg_n$ does 
not start with $\tau^{(2)}$-match. Thus in this case, the 
permtuations of $ \mathcal{E}_{n,2,\tau}$ contribute 
$B_{2,n-1}(y)$ to $A_n(y)$. \\
\ \\
{\bf Case 2}  $\sg \in  \mathcal{E}_{n,k,\tau}$ where $3 \leq k \leq n$.\\
In this case, it is easy to see that the only possible 
$\tau$-matches must occur in $\sg_k \ldots \sg_n$ or 
in $\sg_1 \ldots \sg_{k-1}$.  Thus we have 
$\binom{n-2}{k-2}$ ways to choose that elements that will constitute 
$\sg_2 \ldots \sg_{k-1}$ and $A_{k-1}(1)$ ways to order them so 
that there are no $\tau$-matches in $\sg_1 \ldots \sg_{k-1}$.
Once we have picked $\sg_2 \ldots \sg_{k-1}$, there are 
$A_{n-k+1}(1)$ ways to order the remaining elements so 
that there are no $\tau$-matches in $\sg_k \ldots \sg_n$.
Having picked $\sg$, we have that 
$$\text{des}(\sg)+1 = \text{des}(\sg_1 \ldots \sg_{k-1})+1 
+\text{des}(\sg_k \ldots \sg_{n})+1$$ 
since $\sg_{k-1} > 2$. Hence in this case,  
the permutations in $\mathcal{E}_{n,k,\tau}$ where will contribute 
$ \binom{n-2}{k-2} A_{k-1}(y) A_{n-k+1}(y)$ 
elements to $A_n(y)$.\\
\ \\
It follows that for $n \geq 2$, 
\begin{equation}\label{AB2}
A_n(y) = B_{2,n-1}(y) + \sum_{k=3}^n \binom{n-2}{k-2} A_{k-1}(y)A_{n-k+1}(y).
\end{equation}

We can develop similar recursions for $B_{2,n}(y)$ for $n \geq 2$. 
However we have to consider the cases $j=3$ and 
$j>3$ separately.

First consider, the case where $j=3$.  Note in 
this case $\tau^{(2)} = \red{2~\gamma~3} = 1~\alpha~2$ where 
$\alpha$ is a permutation of $3,\ldots,p+2$ such that  
$\red{\alpha} = \red{\gamma}$. 
We then consider 
three cases for $\sg \in S^1_{n,\tau}$ depending on 
which $\mathcal{E}_{n,k,\tau}$ contains   $\sg$.\\
\ \\
{\bf Case 1.} $\sg \in  E_{n,2,\tau}$.\\
 Thus $\sg = 1~2~\sg_3 \ldots \sg_n$.  To ensure 
that $\sg$ has no $\tau$-matches, we must ensure that 
there are no $\tau$ matches in $2~\sg_3 \ldots \sg_n$ and 
that $\sg$ does not start with a $\tau$-match which is 
equivalent to ensuring that $2~\sg_3 \ldots \sg_n$ does 
not start with $\tau^{(2)}$-match. 
It might seem that to ensure that $\sg$ does not start with 
a $\tau^{(2)}$-match that we must ensure that $2~\sg_3 \ldots \sg_n$ does start with $\tau^{(3)}$-match. However, in this case 
$\tau^{(3)} = \red{\gamma~3}$ does not start with 1 so that 
is automatically true that $2~\sg_3 \ldots \sg_n$ does start with $\tau^{(3)}$-match. Thus the 
permutations in $\mathcal{E}_{n,2,\tau}$ contribute 
$B_{2,n-1}(y)$ to $B_{2,n}(y)$. \\
\ \\
{\bf Case 2.} $\sg \in \mathcal{E}_{n,p+2,\tau}$.\\
In this case, it is easy to see that the only possible 
$\tau$-matches must occur in $\sg_{p+1} \ldots \sg_n$ or 
in $\sg_1 \ldots \sg_{p}$.
Now we have  
$\binom{n-2}{p}$ ways to choose that elements that will constitute 
$\sg_2 \ldots \sg_{p+1}$. We can order these elements 
in any way that we want except that we cannot 
have  $\red{\sg_2 \ldots \sg_{p+1}} = \red{\gamma}$ since otherwise 
$\sg$ would start with at $\tau^{(2)}$ match. 
Note that 
$B_{2,p+1}(y) = \sum_{\beta \in S^1_{p+1}} y^{\des{\beta}+1}$ since 
no permutation of length $p+1$ can contain a $\tau$-match or start 
with $\tau^{(2)}$-match.  Since 
$$\des{1~\sg_2 \ldots \sg_{p+1}}+1 + \des{2~\sg_{p+2} \ldots \sg_n} +1 = 
\des{\sg}$$
and $\des{1~\gamma}+1 = \des{\tau}$, 
the permutations in $\mathcal{E}_{n,p+2,\tau}$ will contribute \\
$ \binom{n-2}{p} (B_{2,p+1}(y) -y^{\des{\tau}})A_{n-p-1}(y)$ 
to $B_{2,n}(y)$.\\
\ \\
{\bf Case 3.}  $\sg \in \mathcal{E}_{n,k,\tau}$ where $3 \leq k \leq n$ and 
$k \notin \{2,p+2\}$.\\
In this case, it is easy to see that the only possible 
$\tau$-matches must occur in $\sg_k \ldots \sg_n$ or 
in $\sg_1 \ldots \sg_{k-1}$.  Thus we have 
$\binom{n-2}{k-2}$ ways to choose that elements that will constitute 
$\sg_2 \ldots \sg_{k-1}$ and $B_{2,k-1}(1)$ ways to order them so 
that there are no $\tau$-matches in $\sg_1 \ldots \sg_{k-1}$ and 
$\sg_1 \ldots \sg_{k-1}$ does 
not start with a $\tau^{(2)}$ match and $A_{n-k+1}(1)$ ways 
to order $\sg_k \ldots \sg_n$ that it contains no $\tau$-match.
It follows that  
the permutations in $\mathcal{E}_{n,k,\tau}$ will contribute 
$ \binom{n-2}{k-2} B_{2,k-1}(y) A_{n-k+1}(y)$ 
to $B_{2,n}(y)$.\\
\ \\
Thus if $n \geq p+2$,  we have the recursion 
\begin{equation}\label{B2j=3a}
B_{2,n}(y) = B_{2,n-1}(y) + \left(\sum_{k=3}^n \binom{n-2}{p-2} 
B_{2,k-1}(y)A_{n-k+1}(y) \right) - \binom{n-2}{p}y^{\des{\tau}}A_{n-p-1}(y).
\end{equation}
For $2 \leq n \leq p+1$, Case 2 does not apply so that we have 
the recursion 
\begin{equation}\label{B2j=3b}
B_{2,n}(y) = B_{2,n-1}(y) + \left(\sum_{k=3}^n \binom{n-2}{p-2} 
B_{2,k-1}(y)A_{n-k+1}(y) \right).
\end{equation}

Before considering the case where $j >3$, we shall 
show how we can derive a recursion (\ref{Urec}) for the 
$U_{n,\tau}(y)$s in this case. We have shown that for all $n \geq 2$, 
\begin{eqnarray*}
A_n(y) &=& B_{2,n-1}(y) + 
\sum_{k=3}^n \binom{n-2}{k-2} A_{k-1}(y)A_{n-k+1}(y) \ \mbox{and} \\
B_{2,n}(y) &=& B_{2,n-1}(y) + \left(\sum_{k=3}^n \binom{n-2}{p-2} 
B_{2,k-1}(y)A_{n-k+1}(y) \right) - \\
&& \ \ \ \chi(n \geq p+2)y^{\des{\tau}}\binom{n-2}{p}A_{n-p-1}(y)
\end{eqnarray*}
where for any statement $A$, we let $\chi(A)$ equal 1 if $A$ is true and 
equal 0 if $A$ is false. Thus we have that for all $n \geq 2$, 
\begin{eqnarray*}
\frac{A_n(y)}{(n-2)!} &=& \frac{B_{2,n-1}(y)}{(n-2)!} + 
\sum_{k=3}^n \frac{A_{k-1}(y)}{(k-2)!}\frac{A_{n-k+1}(y)}{(n-k)!} 
\ \mbox{and} \\
\frac{B_{2,n}(y)}{(n-2)!} &=& \frac{B_{2,n-1}(y)}{(n-2)!} + \left(\sum_{k=3}^n  \frac{B_{2,k-1}(y)}{(k-2)!}\frac{A_{n-k+1}(y)}{(n-k)!} \right) - \chi(n \geq p+2)\frac{y^{\des{\tau}}}{p!}\frac{A_{n-p-1}(y)}{(n-p)!}.
\end{eqnarray*}
Multiplying by $t^{n-2}$ and summing, we obtain the following differential 
equations when we think of $A=A(t,y)$ and $B_2 = B_2(t,y)$ as just functions 
of $t$:
\begin{eqnarray*}
A^{\prime \prime} &=& B_2^\prime +(A^\prime -y)A^{\prime} \ \mbox{and} \\
B_2^{\prime \prime} &=& B_2^\prime +(B_2^\prime -y)A^{\prime} - 
\frac{y^{\des{\tau}}t^p}{p!}A^\prime.
\end{eqnarray*}
Now if $U = U(t,y) = U_\tau(t,y)$, then $A = -ln(U)$. Thus 
\begin{eqnarray}
A^\prime &=& \frac{-U^\prime}{U} \ \mbox{and} \\
A^{\prime \prime} &=& \frac{-U^{\prime \prime}}{U} +\left( \frac{U^\prime}{U}\right)^2.
\end{eqnarray} 
Making these substititions in our first differential equation and 
solving for $B_2^\prime$, we see that 
\begin{equation}\label{eq:B2prime}
B_2^\prime = -\frac{U^{\prime \prime} +y U^\prime}{U}.
\end{equation}
Thus 
\begin{equation}\label{eq:B2prime2}
B_2^{\prime \prime} = -\frac{U^{\prime \prime \prime} +y U^{\prime \prime}}{U} +\frac{(U^{\prime \prime} +y U^{\prime})U^\prime}{U^2}.
\end{equation}
Substituting these expressions into our second differential equation and 
simplifying, we obtain the following differential equation for 
$U$, 
\begin{equation}\label{eq:Uj=3}
U^{\prime \prime \prime} = (1-y) U^{\prime \prime}- \frac{y^{\des{\tau}}t^p}
{p!}U^\prime.
\end{equation}
Taking the coefficient of $\frac{t^n}{n!}$ on both sides of 
(\ref{eq:Uj=3}), we set that 
\begin{equation}
U_{n+3,\tau}(y) = (1-y)U_{n+2}(y) - \binom{n}{p}y^{\des{\tau}}U_{n-p+1}(y).
\end{equation}
in the case where $\tau = 1 2 \gamma 3$ and $\gamma$ is permutation 
of $4,\ldots, 3+p$.

Now consider the recursion for $B_{2,n}(y)$ where $j > 3$. 
We then consider  
two cases for $\sg \in S^1_{n,\tau}$ depending on 
which set $\mathcal{E}_{n,k,\tau}$ contains $\sg$.\\
\ \\
{\bf Case 1.} $\sg \in \mathcal{E}_{n,2,\tau}$.\\
 Thus $\sg = 1~2~\sg_3 \ldots \sg_n$.  To ensure 
that $\sg$ has no $\tau$-matches, we must ensure that 
there are no $\tau$ matches in $2~\sg_3 \ldots \sg_n$ and 
that $\sg$ does not start with a $\tau$-match which is 
equivalent to ensuring that $2~\sg_3 \ldots \sg_n$ does 
not start with $\tau^{(2)}$-match. However in this case, 
we must also ensure that $\sg$ does not start with 
at $\tau^{(2)}$ which means that $2~\sg_3 \ldots \sg_n$ must  
not start with $\tau^{(3)}$-match.
Thus in this case, the 
$\sg \in \mathcal{E}_{n,2,\tau}$ contribute 
$B_{3,n-1}(y)$ to $B_{2,n}(y)$. \\
\ \\
{\bf Case 2}  $\sg \in \mathcal{E}_{n,k,\tau}$ where $3 \leq k \leq n$.\\
In this case, it is easy to see that the only possible 
$\tau$-matches must occur in $\sg_k \ldots \sg_n$ or 
in $\sg_1 \ldots \sg_{k-1}$.  Thus we have 
$\binom{n-2}{k-2}$ ways to choose that elements that will constitute 
$\sg_2 \ldots \sg_{k-1}$ and $B_{2,k-1}(1)$ ways to order them so 
that there are no $\tau$-matches in $\sg_1 \ldots \sg_{k-1}$ and 
$\sg_1 \ldots \sg_{k-1}$ does 
not start with a $\tau^{(2)}$ match and there are $A_{n-k+1}(1)$ ways 
to order $\sg_{k} \ldots \sg_{n}$ so that there is no $\tau$-match. 
It follows that  
the permutations in $\mathcal{E}_{n,k}$ will contribute 
$ \binom{n-2}{k-2} B_{2,k-1}(y) A_{n-k+1}(y)$ 
elements to $B_{2,n}(y)$.\\
\ \\
It follows that if $j \geq 3$, then for $n \geq 2$, 
\begin{equation}\label{B2B3}
B_{2,n}(y) = B_{3,n-1}(y) + \sum_{k=3}^n \binom{n-2}{k-2} B_{2,k-1}(y)A_{n-k+1}(y).
\end{equation}

One can repeat this type of argument to show that in general, 
for $2 \leq i \leq j-2$  
\begin{equation}\label{BiBi+1}
B_{i,n}(y) = B_{i+1,n-1}(y) + \sum_{k=3}^n \binom{n-2}{k-2} B_{i,k-1}(y)A_{n-k+1}(y).
\end{equation}

The recursion for $B_{j-1,n}(y)$ is similar to the recursion for 
$B_{2,n}(y)$ when $j =3$. That is, $\tau^{(j-1)} 
= \red{j-1~\gamma~j} = 1~\alpha~2$, where $\alpha$ is a permutation of 
$3, \ldots, p+2$ and $\red{\gamma} =\red{\alpha}$. 
Then we have to consider 
three cases depending on 
which set $\mathcal{E}_{n,k,\tau}$ contains $\sg$.\\
\ \\
{\bf Case 1.} $\sg \in \mathcal{E}_{n,2,\tau}$.\\
 Thus $\sg = 1~2~\sg_3 \ldots \sg_n$.  To ensure 
that $\sg$ has no $\tau$-matches and does 
not start with $\tau^{(i)}$-match  for $i =2, \ldots, j-1$, we 
clearly have to ensure that 
$2~\sg_3 \ldots \sg_n$ has no $\tau$-matches and does 
not start with $\tau^{(i)}$-match  for $i =2, \ldots, j-1$. 
However, we do not have to worry about 
$2~\sg_3 \ldots \sg_n$ starting with 
$\tau^{(j)} = \red{\sg~j}$ since $\tau^{(j)}$ does not 
start with its least element. 
Thus in this case, the permutations in 
$\mathcal{E}_{n,2,\tau}$ contribute 
$B_{j-1,n-1}(y)$ to $B_{j-1,n}(y)$. \\
\ \\
{\bf Case 2.} $\sg \in \mathcal{E}_{n,p+2,\tau}$
In this case, it is easy to see that the only possible 
$\tau$-matches must occur in $\sg_{p+1} \ldots \sg_n$ or 
in $\sg_1 \ldots \sg_{p}$.
Now we have  
$\binom{n-2}{p}$ ways to choose that elements that will constitute 
$\sg_2 \ldots \sg_{p+1}$. We can order these elements 
in any way that we want except that we cannot 
have  $\red{\sg_2 \ldots \sg_{p+1}} = \red{\gamma}$ since otherwise 
$\sg$ would start with at $\tau^{(j-1)}$ match. 
Note that 
$B_{j-1,p+1}(y) = \sum_{\beta \in S^1_{p+1}} y^{\des{\beta}+1}$ since 
no permutation of length $p+1$ can contain a $\tau$-match or start 
with $\tau^{(i)}$-match for $i=2,\ldots j-1$.  Thus since  
$$\des{1~\sg_2 \ldots \sg_{p+1}}+1 + \des{2~\sg_{p+2} \ldots \sg_n} +1 = 
\des{\sg}$$
and $\des{1~\gamma}+1 = \des{\tau}$, the permutations in 
$\mathcal{E}_{n,p+2,\tau}$ will contribute \\
$ \binom{n-2}{p} (B_{j-1,p+1}(y) -y^{\des{\tau}})A_{n-p+1}(y)$ 
to $B_{j-1,n}(y)$.\\
\ \\
{\bf Case 3.}  $\sg \in \mathcal{E}_{n,k,\tau}$ where $3 \leq k \leq n$ and 
$k \notin \{2,p+2\}$.\\
In this case, it is easy to see that the only possible 
$\tau$-matches must occur in $\sg_k \ldots \sg_n$ or 
in $\sg_1 \ldots \sg_{k-1}$.  Thus we have 
$\binom{n-2}{k-2}$ ways to choose that elements that will constitute 
$\sg_2 \ldots \sg_{k-1}$ and $B_{j-1,k-1}(1)$ ways to order them so 
that there are no $\tau$-matches in $\sg_1 \ldots \sg_{k-1}$ and 
$\sg_1 \ldots \sg_{k-1}$ does 
not start with a $\tau^{(i)}$-match for $i=2,\ldots, j-1$ and 
there are $A_{n-k+1}(1)$ ways 
to order $\sg_{k} \ldots \sg_{n}$ so that there is no $\tau$-match.  
Thus  
the permutations in $\mathcal{E}_{n,k,\tau}$ will contribute 
$ \binom{n-2}{k-2} B_{j-1,k-1}(y) A_{n-k+1}(y)$ 
to $B_{j-1,n}(y)$.\\
\ \\
It follows that for $n \geq 2$, 
\begin{eqnarray}\label{Bj-1Bj-1}
B_{j-1,n}(y) &=& B_{j-1,n-1}(y) + \sum_{k=3}^n \binom{n-2}{k-2} B_{j-1,k-1}(y)A_{n-k+1}(y) - \\
&&\chi(n \geq p+2)\binom{n-2}{p}y^{\des{\tau}}A_{n-p-1}(y). \nonumber 
\end{eqnarray}

Thus for all $n \geq 2$, we have proved that in general 

\begin{align*}
A_n(y)&=B_{2,n-1}(y)+\sum_{k=3}^n {n-2 \choose k-2}A_{k-1}(y)A_{n-k+1}(y)\\
B_{2,n}(y)&=B_{3,n-1}(y)+\sum_{k=3}^n {n-2 \choose k-2}B_{2,k-1}(y)A_{n-k+1}(y)\\
B_{3,n}(y)&=B_{4,n-1}(y)+\sum_{k=3}^n {n-2 \choose k-2}B_{3,k-1}(y)A_{n-k+1}(y)\\
&\vdots\\
B_{j-2,n}(y)&=B_{j-1,n-1}(y)+\sum_{k=3}^n {n-2 \choose k-2}B_{j-2,k-1}(y)A_{n-k+1}(y)\\
B_{j-1,n}(y)&=B_{j-1,n-1}(y)+\left(\sum_{k=3}^n {n-2 \choose k-2}B_{j-1,k-1}(y)A_{n-k+1}(y)\right) - \\
& \ \ \ \ \ \chi(n \geq p+2) 
{n-2 \choose p} y^{\des{\tau}} A_{n-p-1}(y)
\end{align*}

As in the case for $j=3$, 
if we multiply everything by $\frac{t^n}{n!}$ and then sum over $n$ we get the following system of differential equations where we think of 
$A(t,y)$ and $B_i(t,y)$ for $i=2, \ldots, j-1$ as functions of $t$.

\begin{align*}
&(D_1) \ \ \ \ A''=B'_2 +A'^2-yA'\\
&(D_2) \ \ \ \ B_2''=B'_3 +B_2'A'-yA'\\
&(D_3) \ \ \ \ B_3''=B'_4 +B_3'A'-yA'\\
&\ \  \vdots\\
&(D_{j-2})\ B_{j-2}''=B'_{j-1} +B_{j-2}'A'-yA'\\
&(D_{j-1})\ B_{j-1}''=B'_{j-1} +B_{j-1}'A'-yA'-
\frac{t^{p}}{(p)!}y^{\des{\tau}}A'\\
\end{align*}

As in the case $j=3$, we let $A(t,y)=-\log(U(t,y))$ so 
that $A'=\frac{-U'}{U}$ and $A''=\frac{-U''}{U}+\frac{U'^2}{U^2}$. 
Thus under this substitution, the first differential equations becomes  
\begin{equation*}
\frac{-U''}{U}+\frac{U'^2}{U^2}=B_2'+\frac{U'^2}{U^2}+y\frac{U'}{U}
\end{equation*}
so that 
\begin{equation}\label{D2normal}
B_2' =\frac{-U''-yU'}{U}
\end{equation}

In fact, we have the following lemma.
\begin{lemma}  For $2 \le i \le j-1$, 
\begin{equation}\label{Dinormal}
B_i'=\frac{-U^{(i)}-y\sum_{k=1}^{i-1}U^{(k)}}{U}.
\end{equation}
\end{lemma}
\begin{proof}
We proceed by induction on $i$. We have already shown that 
(\ref{Dinormal}) in the case where $i=2$. Now suppose that 
\begin{equation}\label{Di}
B_i'=\frac{-U^{(i)}-y\sum_{k=1}^{i-1}U^{(k)}}{U}.
\end{equation}
Then we must show that  
\begin{equation}\label{Di+1}
B_{i+1}'=\frac{-U^{(i+1)}-y\sum_{k=1}^{i}U^{(k)}}{U}.
\end{equation}
Taking the derivative of both sides of (\ref{Di}) with respect to $t$, 
we see that 
$$B_i''=\frac{-U^{(i+1)}-y\sum_{k=2}^{i}U^{(k)}}{U}+(\frac{U^{(i)}+y\sum_k^{i-1}U^{(k)}}{U})(\frac{U'}{U}).$$
Pluggin our expression for $B_i''$ and $B_i'$ into the differential 
equation $(D_i)$, we see that 
\begin{eqnarray*}
&&\frac{-U^{(i+1)}-y\sum_{k=2}^{i}U^{(k)}}{U}+(\frac{U^{(i)}+y\sum_k^{i-1}U^{(k)}}{U})(\frac{U'}{U})\\
&&=B'_{i+1}+(\frac{-U^{(i)}-y\sum_{k=1}^{i-1}U^{(k)}}{U})(\frac{-U'}{U})-y(\frac{-U'}{U}).
\end{eqnarray*}
Solving for $B_{i+1}'$ we see that 
\begin{equation*}
B_{i+1}' =\frac{-U^{(i+1)}-y\sum_{k=1}^{i}U^{(k)}}{U}.
\end{equation*}
\end{proof}
 
By the Lemma, we know that  
\begin{equation*}
B_{j-1}' =\frac{-U^{(j-1)}-y\sum_{k=1}^{j-2}U^{(k)}}{U},
\end{equation*}
and, hence, 
\begin{equation*}
B_{j-1}'' =\frac{-U^{(j)}-y\sum_{k=2}^{j-1}U^{(k)}}{U}+(\frac{U^{(j-1)}+y\sum_k^{j-2}U^{(k)}}{U})(\frac{U'}{U}).
\end{equation*}
Thus plugging these expressions into the differential equation 
$(D_{j-1})$, we obtain that 
\begin{eqnarray*}
&&\frac{-U^{(j)}-y\sum_{k=2}^{j-1}U^{(k)}}{U}+(\frac{U^{(j-1)}+y\sum_k^{j-2}U^{(k)}}{U})(\frac{U'}{U})\\
&&= \frac{-U^{(j-1)}-y\sum_{k=1}^{j-2}U^{(k)}}{U}+\\
&&(\frac{-U^{(j-1)}-y\sum_{k=1}^{j-2}U^{(k)}}{U})(\frac{-U'}{U})-y(\frac{-U'}{U})-\frac{t^{p}}{p!}y^{\text{des}(\tau)}(\frac{-U'}{U}).
\end{eqnarray*}
Simplifying this expression yields that 
\begin{equation}\label{FDj-1}
U^{(j)}= (1-y)U^{(j-1)}-\frac{t^{p}}{p!}y^{\text{des}(\tau)}U'.
\end{equation}
Then taking the coefficient of $\frac{t^n}{n!}$ on both side 
of (\ref{FDj-1}) gives that 
$$U_{n+j}=(1-y)U_{n+j-1}+y^{\des{\tau}}{n \choose p}U_{n-p+1}$$
which is what we wanted to prove. 
\end{proof}

We end this section with an example of the use of 
Theorem \ref{thm:1-j-1-sg-j}. Let $\tau = 1243$ and 
\begin{equation*}
A_{n,\tau}(t,y) = \sum_{n \geq 1} A_{n,\tau}(y) \frac{t^n}{n!} = \sum_{n \geq 1} \frac{t^n}{n!} \sum_{\sg \in S^1_{n,\tau}} y^{\des{\sg}+1} = \sum_{n \geq 1} \frac{t^n}{n!} \sum_{C \in \mathcal{L}^{ncm}_n(\tau)}.
\end{equation*}
Thus it is easy to check that $A_{1,\tau}(y)  =y$, 
$A_{2,\tau}(y)  =y$, 
$A_{3,\tau}(y)  =y+y^2$, and  
$A_{4,\tau}(y)  =y+3y^2+y^3$.
Now  
$$U_{\tau}(t,y) = \sum_{n\geq 0} U_{n,\tau}(y) = e^{-A_\tau(t,y)}$$
so that one can use Mathematica to compute that $U_{0,\tau}(y)  =1$, 
$U_{1,\tau}(y)  = -y$, \\
$U_{2,\tau}(y)  =-y +y^2 y$,\ $U_{3,\tau}(y)  =-y+2 y^2-y^3$, and  
$U_{4,\tau}(y)  = -y+4 y^2-3 y^3+y^4$.\\
\ \\
By Theorem  \ref{thm:1-j-1-sg-j}, we know that we have 
the recursion that 
$$U_{n+3,\tau}(y) = (1-y) U_{n+2,\tau}(y) - y U_{n,\tau}(y).$$
Thus we can use this recursion to compute that \\
$U_{5,\tau}(y)  = -y+6 y^2-8 y^3+4 y^4-y^5$,\\
$U_{6,\tau}(y)  = -y+8 y^2-16 y^3+13 y^4-5 y^5+y^6$,\\
$U_{7,\tau}(y)  = -y+10 y^2-28 y^3+32 y^4-19 y^5+6 y^6-y^7$, and \\
$U_{8,\tau}(y)  = -y+12 y^2-44 y^3+68 y^4-55 y^5+26 y^6-7 y^7+y^8$.\\
\ \\
But then we know that 
$NCM_\tau(t,x,y) = \frac{1}{(U_\tau(t,y))^x}$. Thus one can use 
Mathematica to show that 
$$ 
NCM_\tau(t,x,y) = \sum_{n\geq 0} S^{ncm}_{n,\tau}(x,y) \frac{t^n}{n!},
$$
where  
$S^{ncm}_{0,\tau}(x,y) =1$,\    
$S^{ncm}_{1,\tau}(x,y) = xy$, \  
$S^{ncm}_{2,\tau}(x,y) = x y+x^2 y^2$,\\
\ \\
$S^{ncm}_{3,\tau}(x,y) = x y+x y^2+3 x^2 y^2+x^3 y^3$,\\
\ \\
$S^{ncm}_{4,\tau}(x,y) =x y+3 x y^2+7
x^2 y^2+x y^3+4 x^2 y^3+6 x^3 y^3+x^4 y^4$, \\
\ \\
$S^{ncm}_{5,\tau}(x,y) = x y+9 x y^2+15 x^2 y^2+8 x y^3+25 x^2 y^3+25 x^3 y^3+x y^4+5 x^2 y^4+10
x^3 y^4+10 x^4 y^4+x^5 y^5$,\\
\ \\
$S^{ncm}_{6,\tau}(x,y) = x y+23 x y^2+31 x^2 y^2+45 x y^3+119 x^2 y^3+90 x^3 y^3+20 x y^4+73 x^2 y^4+105 x^3 y^4+65
x^4 y^4+x y^5+6 x^2 y^5+15 x^3 y^5+20 x^4 y^5+15 x^5 y^5+x^6 y^6$, \\
\ \\
$S^{ncm}_{7,\tau}(x,y) =x y+53 x y^2+63 x^2 y^2+217 x y^3+490 x^2 y^3+301
x^3 y^3+192 x y^4+623 x^2 y^4+749 x^3 y^4+350 x^4 y^4+47 x y^5+196 x^2 y^5+343 x^3 y^5+315 x^4 y^5+140 x^5 y^5+x y^6+7 x^2 y^6+21 x^3 y^6+35 x^4
y^6+35 x^5 y^6+21 x^6 y^6+x^7 y^7$, and \\
\ \\
$S^{ncm}_{8,\tau}(x,y) =x y+115 x y^2+127 x^2 y^2+916 x y^3+1838 x^2 y^3+966 x^3 y^3+1500 x y^4+4333 x^2
y^4+4466 x^3 y^4+1701 x^4 y^4+765 x y^5+2810 x^2 y^5+4214 x^3 y^5+3164 x^4 y^5+1050 x^5 y^5+105 x y^6+495 x^2 y^6+1008 x^3 y^6+1148 x^4 y^6+770 x^5
y^6+266 x^6 y^6+x y^7+8 x^2 y^7+28 x^3 y^7+56 x^4 y^7+70 x^5 y^7+56 x^6 y^7+28 x^7 y^7+x^8 y^8$.

\section{Conclusions}
As mentioned in the introduction, we know of 
two other ways to compute $NCM_\tau(t,x,y)$ and 
$NCM_\Upsilon(t,y)$ for various $\tau$'s and $\Upsilon$'s.

Our second approach again uses the function 
$U_\tau(t,x,y)$ as defined in the previous section where 
$$NCM_\tau(t,x,y) = \sum_{n\geq 0} ncmS_{n,\tau}(x,y) \frac{t^n}{n!} = 
\frac{1}{(U_\tau(t,y))^x}.$$
It follows that 
\begin{equation}\label{conclusion1}
U_\tau(t,y) = \frac{1}{NCM_\tau(t,1,y)} = 
\frac{1}{\sum_{n\geq 0} ncmS_{n,\tau}(x,y) \frac{t^n}{n!}}.
\end{equation}
Remmel and his coathors \cite{Bec4,Lan2,MenRem1,MenRem2,MenRem3,MRR,Remriehl,wag} developed a method called 
the homomorphism method to show that many generating 
functions involving permutation statistics can be 
applied to simple symmetric function identities such as 
\begin{equation}\label{conclusion2}
H(t) = 1/E(-t)
\end{equation}
where 
$$H(t) = \sum_{n\geq 0} h_n t^n = \prod_{i\geq 1} \frac{1}{1-x_it}$$
is the generating function of the homogeneous symmetric functions  
$h_n$ in infinitely many variables $x_1,x_2, \ldots $ and 
$$E(t) = \sum_{n\geq 0} e_n t^n = \prod_{i\geq 1} 1+x_it $$
is the generating function of the elementary symmetric functions  
$e_n$ in infinitely many variables $x_1,x_2, \ldots $.
Now if we define a homomorphism $\theta$ and the ring of 
symmetric function so that 
$$\theta(e_n) = \frac{(-1)^n}{n!} ncmS_{n,\tau}(1,y),$$
then 
$$\theta(E(-t)) = \frac{1}{\sum_{n\geq 0} ncmS_{n,\tau}(1,y) \frac{t^n}{n!}}.$$
Thus $\theta(H(t))$ should equal $U_\tau(t,y)$.  One can 
then use the combinatorial methods associated with 
the homomorphism method to develop recursions for 
the coefficient of $U_\tau(t,y)$ much like we did 
in Theorem \ref{thm:1-j-1-sg-j}. For example, 
we can show that 
$$U_{n,1324}(y) = (1-y)U_{n-1,1324}(y)+ \sum_{k=2}^{\lfloor n/2 \rfloor} 
(-y)^{k-1} C_{k-1} U_{n-2k+1,1324}(y)
$$
where $C_k$ is $k$-th Catalan number and 
$$U_{n,1423}(y) = (1-y)U_{n-1,1423}(y)+ \sum_{k=2}^{\lfloor n/2 \rfloor} 
(-y)^{k-1} \binom{n-k-1}{k-1} U_{n-2k+1,1423}(y).
$$

The second author has 
 developed a third way to approach 
the problem of computing $NCM_\Upsilon(t)$ which is 
completely different from the other two approaches. That method involves 
defining a certain bijection between the set of derangements and certain 
fillings of brick tabloids. That bijection allows one  
to compute generating functions the number derangements 
that have no cycle $\Upsilon$-matches by applying an appropriate  ring 
homomorphism defined on the ring of symmetric functions $\Lambda$ in 
infinitely many variables $x_1,x_2, \ldots$ to certain simple symmetric 
function identities as described above. One can then multiply the resulting generating function  
by $e^t$ to obtain generating for $ncmS_n(\tau)$.   
This approach is generally much more complicated 
than the first two approaches. However, it allows us to compute 
$NCM_{\Upsilon}(t)$ for a number of sets of permutations 
$\Upsilon$  which seem beyond the either the techniques employed in this paper 
or the second approach described above. For example, one 
can show that 
$$NCM_\Upsilon(t) = \frac{e^t}{1- \sum_{n\geq 3} \frac{2(-t)^n}{n!}}$$  
where $\Upsilon$ is the set of permutations that contain 
$1234$ and all permutations $\sg =\sg_1 \sg_2 \sg_3 \sg_4 \sg_5$ such 
such that $\sg_1 < \sg_2 > \sg_3 < \sg_4 > \sg_5$. This approach will 
be described in a forth coming paper.

\newpage

\end{document}